\documentclass[a4paper]{amsart}
\usepackage[english]{babel}
\usepackage{amsmath, amssymb, amsthm, amscd}
\usepackage{enumerate}
\usepackage{palatino}
\usepackage{mathpazo}
\usepackage{paralist}
\usepackage[a4paper]{geometry}
\usepackage{url}
\usepackage{hyperref}
\usepackage[all]{xy}

\DeclareMathOperator{\im}{im}

\newcommand{\Fix}{\mathrm{Fix\;}}

\newcommand{\NN}{\mathbb{N}}
\newcommand{\ZZ}{\mathbb{Z}}
\newcommand{\QQ}{\mathbb{Q}}
\newcommand{\RR}{\mathbb{R}}
\newcommand{\CC}{\mathbb{C}}
\newcommand{\CP}{\mathbb{C}P}
\newcommand{\HP}{\mathbb{H}P}
\newcommand{\KP}{\mathbb{K}P}
\newcommand{\K}{\mathbb{K}}

\newcommand{\DD}{\mathcal{D}}
\newcommand{\Dop}{\mathcal{D}^{\mathrm{op}}}

\newcommand{\ind}{\mathrm{ind}}

\newcommand{\D}{\mathcal{D}}

\newcommand{\wgt}{\mathrm{wgt}}

\newcommand{\Hom}{\mathrm{Hom}}

\newcommand{\F}{\mathbb{F}}

\DeclareMathOperator{\SC}{\mathsf{SC}}

\DeclareMathOperator{\secat}{\mathsf{secat}}

\newcommand{\gFS}{g_{\mathrm{FS}}}

\theoremstyle{plain}
\newtheorem{theorem}{Theorem}[section]
\newtheorem{prop}[theorem]{Proposition}
\newtheorem{lemma}[theorem]{Lemma}
\newtheorem{cor}[theorem]{Corollary}  

\newtheorem*{theorem*}{Theorem}

\theoremstyle{definition}
\newtheorem{definition}[theorem]{Definition}

\theoremstyle{remark}
\newtheorem{remark}[theorem]{Remark}

\numberwithin{equation}{section}

\begin{document}
\setlength{\parindent}{0cm}

\title{Existence results for closed Finsler geodesics via spherical complexities}
\author{Stephan Mescher}
 \address{Mathematisches Institut \\ Universit\"at Leipzig \\ Augustusplatz 10 \\ 04109 Leipzig \\ Germany}
\email{mescher@math.uni-leipzig.de}
\date{\today}
\maketitle
\begin{abstract}
We apply topological methods and a Lusternik-Schnirelmann-type approach to prove existence results for closed geodesics of Finsler metrics on spheres and projective spaces. The main tool in the proofs are spherical complexities, which have been introduced in earlier work of the author. Using them, we show how pinching conditions and inequalities between a Finsler metric and a globally symmetric metric yield the existence of multiple closed geodesics as well as upper bounds on their lengths. 
\end{abstract}

\section{Introduction}

In \cite{SpherComp}, the author has introduced integer-valued homotopy-invariants based on spaces of continuous maps from spheres to topological spaces. Given a closed manifold $M$, these invariants can be used to estimate numbers of orbits of critical points of $G$-invariant differentiable functions on Hilbert submanifolds of $C^0(S^n,M)$, where $G$ is a subgroup of $O(n+1)$ and where we consider the $O(n+1)$-action by reparametrization, i.e. the one induced by the standard $O(n+1)$-action on $S^n$. For $n=1$, this leads to estimates of critical orbits of $O(2)$-invariant or $SO(2)$-invariant functions on Hilbert manifolds of free loops in $M$. \bigskip 

A typical situation in which this method applies is the study of energy functionals of Riemannian or Finsler metrics on $H^1(S^1,M)$, the Hilbert manifold of free loops in $M$ that is locally modelled on the Sobolev space $H^1(S^1,\RR^{\dim M})=W^{1,2}(S^1,\RR^{\dim M})$. The critical points of such functionals are precisely the closed geodesics of the metric under consideration and methods from Morse theory and Lusternik-Schnirelmann theory have been very successfully implemented to derive existence results for closed geodesics in the previous decades, we refer e.g. to \cite{OanceaGeod} for an overview. Here, we only want to present certain recent results that are close to or intersect with the results shown in this note. In the following, we will always let $\lambda$ denote the reversibility of the Finsler metric under consideration, see \cite{RadeSphere} or \cite{RadeNonrev} for the definition of reversibility. For $x\in \RR$ we further let $\lceil x \rceil$ denote the smallest integer that is bigger than or equal to $x$.

\begin{itemize}
\item In \cite[Theorem 3]{RadeExist}, H.-B. Rademacher has shown that any Finsler metric on $S^n$ whose flag curvature satisfies $\frac{\lambda^2}{(1+\lambda)^2}< K \leq 1$ admits $\left\lceil \frac{n}{2}-1\right\rceil$ distinct prime closed geodesics whose lengths are less than $2n\pi$.
\item Rademacher has further shown that any Finsler metric on $S^{2n}$,  $n\geq 3$, whose flag curvature satisfies $\frac{1}{(n-1)^2} < K \leq 1$ admits two distinct prime closed geodesics of length less than $2n\pi$.
\item In \cite{LongDuan} and \cite{DuanLong}, Y. Long and H. Duan have proven that every Finsler metric on $S^3$ and $S^4$ admits two distinct prime closed geodesics. 
\item Duan has shown in \cite{DuanNonhyp} that any Finsler metric on $S^n$ whose flag curvature satisfies $\frac{\lambda^2}{(1+\lambda)^2} < K \leq 1$ admits three distinct prime closed geodesics.
\end{itemize}

While Rademacher is using methods of comparison theory and the Fadell-Rabinowitz index to show his results, Duan and Long apply a classification method of closed geodesics by their linearized Poincar\'{e} maps and Morse-theoretic arguments for bumpy metrics. Note that while Duan and Long obtain very general results, their approach does not yield any upper bound on the length of the closed geodesics whose existence they have shown. \bigskip

We want to derive similar results using more topological methods, namely using spherical complexities. The author has shown in \cite{SpherComp} that the invariants introduced therein satisfy a Lusternik-Schnirelmann-type theory and established a relationship between these invariants and critical points of differentiable functions, which we want to summarize for the case of free loop spaces. Given a closed manifold $M$ let $\Lambda_0M= \{\gamma \in C^0(S^1,M)\; | \; \gamma \text{ is nullhomotopic} \}$. For $A \subset \Lambda_0M$ we let $\SC_M(A)$ denote the smallest number $r \in \NN\cup \{+\infty\}$, for which $A$ can be covered by open subsets $U_1,\dots,U_r\subset \Lambda_0M$ such that for each $j \in \{1,2,\dots,r\}$ there exists a continuous map $s:U_j \to C^0(B^2,M)$ with $s(\gamma)|_{S^1}=\gamma$ for each $\gamma \in U_j$. Here, $B^2$ denotes the closed unit disk in $\RR^2$.

Let $f:\Lambda^1M \to \RR$ be a $G$-invariant $C^{1,1}$-function that is bounded from below, where $\Lambda^1M:=H^1(S^1,M)\cap \Lambda_0M$ and $G\subset O(2)$ is a closed subgroup. Denote its sublevel sets by $f^a := f^{-1}((-\infty,a])$.	 In \cite{SpherComp}, it is shown that under mild additional assumptions, the number $\SC_M(f^a)-1$ provides a lower bound on the number of non-constant critical $G$-orbits in $f^a$. Relating the invariants $\SC_M$ to the topological notion of sectional categories of fibrations, one can establish lower bound on the numbers $\SC_M(f^a)$ in terms of  cohomology rings of the free loop space of $M$.

In this note, we apply this topological approach and results from \cite{SpherComp} to derive existence results for closed geodesics of Finsler metrics of positive flag curvature on spheres and projective spaces. A first result on the existence of closed Finsler geodesics using spherical complexities was shown by the author in \cite[Theorem 7.4]{SpherComp}. Foundations on the notion of Finsler metrics and their geodesics can be found in \cite{Shen} or \cite{RadeNonrev}. \bigskip

We want to give an overview over the results of this article. Let $M$ be a closed manifold and $F:TM \to [0,+\infty)$ be a Finsler metric on $M$ of reversibility $\lambda$ whose flag curvature satisfies $0<K\leq 1$. Denote the length of its shortest non-trivial closed geodesic by $\ell_F$. Then:
\begin{itemize}
\item for $M=S^{2n}$, $n\geq 2$, there will be two distinct closed geodesics of length less than $2\ell_F$ if one of the following holds: 
\begin{itemize}
\item[$\cdot$] $ F \leq \frac{1+\lambda}\lambda \sqrt{g_1}$, where $g_1$ is the round metric of curvature $1$ (Theorem \ref{TheoremGeodSpheresEven}),
\item[$\cdot$] $ K > \frac{9\lambda^2}{4(1+\lambda)^2}$ (Theorem \ref{TheoremGeodSpheresEvenPinch}),
\end{itemize}
\item for $M=S^{2n+1}$, $n \in \NN$, and given $k,m \in \NN$ there will be $\left\lceil \frac{2m}k \right\rceil$ distinct closed geodesics of length less than $(k+1)\ell_F$ if one of the following holds:
\begin{itemize}
\item[$\cdot$]  $\displaystyle K > \frac{\lambda^2}{(1+\lambda)^2} \quad \text{and} \quad F < \frac{(k+1)(1+\lambda)}{m\lambda} \sqrt{g_1}$ \enskip (Theorem \ref{TheoremGeodSpheresOdd}),
\item[$\cdot$] $\displaystyle K> \frac{(2m+1)^2\lambda^2}{(k+1)^2(1+\lambda)^2}$ \enskip (Theorem \ref{TheoremGeodSpheresOddPinch}),
\end{itemize}
\item for $M=\CP^n$ or $M=\HP^n$, $n \geq 3$, there will be two distinct closed geodesics of length less than $2\ell_F$ if $ F < \frac{1+\lambda}{\lambda}\sqrt{g_1}$, where $g_1$ is the globally symmetric Riemannian metric whose prime closed geodesics have length $2\pi$ (Theorem \ref{TheoremKPn}).
\end{itemize}

All of these results are summarized in the following table:
\begin{center}
\begin{tabular}{c|c|c|c|c}
M & number & $<$ length & flag curvature & metric estimate \\
\hline
$S^{2n},\: n \geq 2$ & $2$ & $2\ell_F$ & $0 < K \leq 1$ & $F \leq \frac{1+\lambda}\lambda \sqrt{g_1}$ \\
$S^{2n},\: n \geq 2$ & $2$ & $2\ell_F$ & $\frac{9\lambda^2}{4(1+\lambda)^2}<K\leq 1$ &  -  \\		
$S^{2n+1}$ & $\left\lceil\frac{2m}{k}\right\rceil$ & $(k+1)\ell_F$ & $\frac{\lambda^2}{(1+\lambda)^2}<K\leq 1$ &  $F \leq \frac{(k+1)(1+\lambda)}{m\lambda} \sqrt{g_1}$  \\	
$S^{2n+1}$ & $\left\lceil\frac{2m}{k}\right\rceil$ & $(k+1)\ell_F$ & $\frac{(2m+1)^2\lambda^2}{(k+1)^2(1+\lambda)^2}<K\leq 1$ & -  \\	
$\CP^n, \; \HP^n, \;n \geq 3$ & $2$ & $2 \ell_F$ & $0 < K \leq 1$ & $F \leq \frac{1+\lambda}{\lambda}\sqrt{g_1}$  
\end{tabular}
\end{center}

\bigskip 

In section 2 we recall the constructions and main results from \cite{SpherComp} for the case of free loop spaces. These results will be applied to Finsler energy functionals on even- and odd-dimensional spheres, resp., in sections 3 and 4. Section 5 considers Finsler metrics on complex and quaternionic projective spaces.

\section*{Acknowledgements}

The author thanks Hans-Bert Rademacher for kindly sharing his expertise on Finsler geodesics and for helpful comments on an earlier draft of the manuscript. He also thanks Philip Kupper for helpful conversations about free loop spaces and the anonymous referee for helpful comments improving the quality of the article.

\section{Spherical complexities on loop spaces}

We recall some definitions and results about spherical complexities from \cite{SpherComp}. We will restrict to the setting of free loop spaces and slightly simplify the notation, e.g. the numbers $\SC_X(A)$ appearing below are denoted by $\SC_{1,X}(A)$ in \cite{SpherComp}.

Given a topological space $X$ we let $\Lambda X := C^0(S^1,X)$ denote its free loop space and let 
$$\Lambda_0X := \{ \gamma \in \Lambda X \ | \ \gamma \text{ is nullhomotopic}\}$$ 
denote the set of contractible loops in $X$. We further put $BX := C^0(B^2,X)$ where $B^2$ denotes the closed unit disk in $\RR^2$. 

\begin{definition}
\begin{enumerate}[(1)]
\item Let $A \subset \Lambda_0X$. A \emph{loop filling over $A$} is a continuous map $s:A \to BX$ with $s(\gamma)|_{S^1} = \gamma$ for all $\gamma\in A$. We call $A$ a \emph{loop filling domain} if there exists a loop filling over $A$, let $\DD(X)$ be the set of all loop filling domains in $\Lambda_0 X$ and put 
$$\Dop(X) := \{ U \in \D(X) \ | \ U \text{ is open in } \Lambda_0X\}.$$
\item Given $A \subset \Lambda_0X$ we put
$$\SC_X(A) := \inf \Big\{ r \in \NN \ \Big| \ \exists U_1,\dots,U_r \in \Dop(X) \text{ with } A \subset \bigcup_{i=1}^r U_i\Big\} \in \NN \cup \{+\infty\}.$$
\end{enumerate}
\end{definition}
\begin{remark}
The numbers $\SC_X(A)$ are related to the notion of \emph{sectional category} (or \emph{Schwarz genus}) of a fibration, see e.g. \cite[Section 9.3]{CLOT}. Let $r: BX \to \Lambda_0X$, $\gamma \mapsto \gamma|_{S^1}$. Since the inclusion $i: S^1 \hookrightarrow B^2$ is a cofibration, $r$ is a fibration.  In terms of sectional categories of fibrations, it was shown in \cite[Proposition 2.2]{SpherComp} that
\begin{equation}
\label{EqSCrelsecat}
\SC_X(A) \geq \secat\Big(i_A^*r: i_A^*BX \to A \Big), 
\end{equation}
for all $A\subset \Lambda_0X$, where $i_A:A \hookrightarrow \Lambda_0X$ denotes the respective inclusion. This relation to sectional categories can be exploited to derive lower bounds for them in terms of cup lengths of certain ideals of the cohomology rings $H^*(\Lambda_0X;R)$, see \cite[Proposition 2.4]{SpherComp}.
\end{remark}

In \cite{SpherComp}, the author has carried out a Lusternik-Schnirelmann-type theory for the numbers $\SC_X$ and their higher-dimensional generalizations. We are going to give a brief account of the results that are relevant for the study of closed geodesics.

We consider the $O(2)$-action on $\Lambda_0X$  given by reparametrization of loops and its restrictions to subgroups of $O(2)$. Given a topological space $X$, we further let 
$$c_1: X \to \Lambda_0X, \quad (c_1(x))(t)=x\quad \forall x \in X, \ t \in S^1,$$
 i.e. $c_1$ is the inclusion of constant loops. 
Given any function $f:Y \to \RR$ on a topological space $Y$ we denote its closed and open sublevel sets, resp., by 
$$f^a := f^{-1}((-\infty,a]), \qquad f^{<a} := f^{-1}((-\infty,a)), \qquad \forall a \in \RR.$$

\begin{theorem}[{\cite[Theorem 2.19]{SpherComp}}]
\label{TheoremLS}
Let $X$ be a metrizable absolute neighborhood retract, $G\subset O(2)$ be a closed subgroup, $A\subset \Lambda_0X$ be $G$-invariant with $c_1(X)\subset A$ and $f: A \to \RR$ be continuous and $G$-invariant. Let $\varphi: A \to A$ be a homotopy equivalence, such that $\Fix \varphi\subset A$ is $G$-invariant. If 
\begin{itemize}
\item $f$ is bounded from below, 
\item $f$ is constant on $c_1(X)$,
\item $(f,\varphi)$ satisfies the following conditions: 
\begin{enumerate}[({D}1)]
\item $f(\varphi(x))<f(x)$ for all $x \in A\setminus \Fix \varphi$, 
\item If $f$ is bounded on $B \subset A$, but $\{f(y)-f(\varphi(y)) \ | \ y \in B\}$ is not bounded away from zero, then $\bar{B} \cap \Fix \varphi \neq \varnothing$,
\end{enumerate}
\item $f(\Fix \varphi)$ is isolated in $\RR$,
\end{itemize}
then $$\nu(f,\varphi,a) \geq \SC_X(f^{a})-1$$
for each $a\in \RR$, where $\nu(f,\varphi,a)$ denotes the number of $G$-orbits in $\Fix \varphi \cap f^a$.
\end{theorem}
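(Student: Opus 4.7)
I would follow the standard Lusternik--Schnirelmann argument, adapted to spherical complexity. Suppose $\nu:=\nu(f,\varphi,a)<\infty$ (otherwise the inequality is trivial), and enumerate the critical $G$-orbits $\mathcal{O}_1,\dots,\mathcal{O}_\nu$ in $\Fix\varphi\cap f^a$. My plan is to exhibit an open cover of $f^a$ by $\nu+1$ elements of $\Dop(X)$, which gives $\SC_X(f^a)\le\nu+1$; the extra ``$+1$'' comes from a single loop filling neighborhood of the constant loops $c_1(X)$.

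\emph{Filling neighborhoods.} Since $X$ is a metrizable ANR, so is $\Lambda_0X$, and $r:BX\to\Lambda_0X$ is a Hurewicz fibration (the inclusion $S^1\hookrightarrow B^2$ being a cofibration). Each $\mathcal{O}_i$ is compact (as a $G$-orbit of a closed subgroup of $O(2)$) and consists of nullhomotopic loops, so a local section of $r$ near one point of $\mathcal{O}_i$ can be made equivariant using the $O(2)$-action on $BX$ induced by reparametrizing $B^2$, producing a $G$-invariant open neighborhood $V_i\in\Dop(X)$ of $\mathcal{O}_i$ equipped with a section $s_i:V_i\to BX$. Similarly, the trivial filling $c_1(x)\mapsto(z\mapsto x)$ extends to a section over an open $V_0\in\Dop(X)$ containing $c_1(X)$.

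\emph{Descent and transfer.} Put $W:=V_0\cup\cdots\cup V_\nu$. The complement $B:=f^a\setminus W$ is closed in $A$, disjoint from $\Fix\varphi$, with $f|_B$ bounded, so (D2) yields $\varepsilon>0$ with $f-f\circ\varphi\ge\varepsilon$ on $B$. Combined with (D1), the lower bound on $f$, and the isolation of $f(\Fix\varphi)$, iterating $\varphi$ drives each orbit in $f^a$ into $W$; the bound $N:=\lceil(a-\inf f)/\varepsilon\rceil+1$ on the number of visits any orbit can make to $B$ yields a uniform $N$ with $\varphi^N(f^a)\subset W$. Because $\varphi$ is a homotopy equivalence (and, in the intended applications, homotopic to $\id_A$ through the deformation data producing it), I build a homotopy $K:A\times[0,1]\to A$ with $K_0=\id$ and $K_1(f^a)\subset W$. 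Setting $U_i:=K_1^{-1}(V_i)\cap f^a$ gives an open cover of $f^a$ by $\nu+1$ sets, and for $x\in U_i$ the path $t\mapsto K_t(x)$ is an annular map $\alpha_x:S^1\times[0,1]\to X$ from $x$ to $K_1(x)\in V_i$; gluing the disk $s_i(K_1(x))\in BX$ to $\alpha_x$ along the common boundary circle produces a continuous loop filling $\tilde s_i:U_i\to BX$. Hence $U_i\in\Dop(X)$ and $\SC_X(f^a)\le\nu+1$.

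\emph{Main obstacle.} The delicate step is upgrading the pointwise descent ``$\varphi^{N(x)}(x)\in W$'' to a single continuous homotopy $K$ moving all of $f^a$ into $W$ simultaneously, and realising $K$ via a homotopy from $\id$ to $\varphi^N$ that stays inside $A$. The uniform gap from (D2), the lower boundedness of $f$, and the isolation of $f(\Fix\varphi)$ together should supply a uniform $N$ and prevent accumulation at non-critical levels, while the homotopy equivalence hypothesis on $\varphi$ is what permits interpolation between the iterates $\id,\varphi,\dots,\varphi^N$. Verifying that these ingredients combine compatibly, and in particular that the interpolating homotopy can be chosen to land in $A$ and respect $f^a$, is the principal technical hurdle.
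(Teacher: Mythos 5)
The paper does not actually reprove this theorem (it is quoted from \cite[Theorem 2.19]{SpherComp}), so I can only assess your argument on its own terms, and it has a genuine error at its central step. Your descent estimate controls the \emph{number} of indices $n$ with $\varphi^n(x)\in B=f^a\setminus W$ (at most $(a-\inf f)/\varepsilon$, since each such visit costs $\varepsilon$), but it does not control \emph{when} those visits occur, so the conclusion that a single uniform $N$ satisfies $\varphi^N(f^a)\subset W$ is false. A point starting arbitrarily close to a fixed orbit can stay inside $W$ for arbitrarily many iterates before it finally exits and crosses $B$ on its way down (this already happens for the time-one map of a gradient flow near a local maximum), so for every fixed $N$ there are points of $f^a$ with $\varphi^N(x)\in B$. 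Consequently no homotopy $K$ with $K_1(f^a)\subset W$ can be produced by your scheme, and the whole ``one global push into a neighborhood of the critical set'' architecture collapses. This is exactly why Lusternik--Schnirelmann-type arguments in this discrete setting are run level by level: using that $f(\Fix\varphi)$ is isolated in $\RR$, one proves a deformation lemma at each critical value $c$ (a neighborhood $U_c$ of the finitely many orbits at level $c$ with $\SC_X(U_c)$ bounded by their number, together with iterates of $\varphi$ taking $f^{c+\varepsilon}\setminus U_c$ into $f^{c-\varepsilon}$), and then combines subadditivity and deformation monotonicity of $\SC_X$ in an induction over the critical values, the ``$-1$'' being contributed by the level of the constant loops $c_1(X)$, over a neighborhood of which $\SC_X=1$.

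A second gap, which you flag yourself but do not close, is the use of a homotopy from $\id_A$ to $\varphi^N$: the hypotheses only say that $\varphi$ is a homotopy equivalence, not that it is homotopic to the identity, and your annulus-gluing construction (which is in essence the proof that $\SC_X$ is monotone under deformations inside $\Lambda_0X$) cannot even start without an actual homotopy from the inclusion of $f^a$ to a map into the filling domains; this must either be derived from the stated hypotheses or avoided by invoking the deformation/monotonicity properties of $\SC_X$ abstractly. Two smaller points: the sets $U_i=K_1^{-1}(V_i)\cap f^a$ are open only in $A$ (indeed, intersecting with the closed sublevel $f^a$ they need not be open at all), whereas membership in $\Dop(X)$ requires openness in $\Lambda_0X$; and the existence of a loop-filling neighborhood of each compact orbit should be obtained from the section-extension property of the fibration $r:BX\to\Lambda_0X$ over the metrizable ANR $\Lambda_0X$, not from an equivariance argument --- equivariance is in any case unnecessary, since the covering definition of $\SC_X$ does not require the covering sets to be $G$-invariant.
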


A result on the numbers of closed geodesics is given as a special case of this theorem. Given a closed manifold $M$ we let $\Lambda^1 M:= \Lambda_0 M \cap H^1(S^1,M)$. Since $\Lambda^1 M$ is a connected component of $H^1(S^1,M)$, it inherits a Hilbert manifold structure. Moreover, it is well-known that $\Lambda^1M$ and $\Lambda_0M$ are homotopy-equivalent.

\begin{definition}
Let $M$ be a closed manifold and $F:TM \to [0,+\infty)$ be a Finsler metric. We let 
$$E_F: \Lambda^1 M \to \RR, \quad E_F(\gamma) = \int_0^1F_{\gamma(t)}(\dot\gamma(t))^2\; dt,$$ denote its energy functional.  Given $a \in \RR$ we further let $N_S(F,a)$ denote the number of $SO(2)$-orbits of non-constant closed geodesics in $E_F^a$. If $F$ is reversible, then we let $N(F,a)$ denote the number of $O(2)$-orbits of non-constant closed geodesics in $E_F^a$. 
\end{definition}

It is well-known, see e.g. \cite{Mercuri}, that $E_F$ is of class $C^{1,1}$, satisfies the Palais-Smale condition and that its critical points are precisely the closed geodesics of $F$. From these observations one derives that Theorem \ref{TheoremLS} is applicable to $f=E_F$ and $\varphi$ being the time-1 map of a negative gradient flow of $E_F$. One obtains the following from Theorem \ref{TheoremLS}:

\begin{theorem}[{\cite[Theorem 3.11]{SpherComp}}]
\label{TheoremNumberOrbits}
Let $M$ be a closed manifold, $F:TM \to \RR$ be a Finsler metric, $E_F:\Lambda^1M \to [0,+\infty)$ be its energy functional and $a \in \RR$. Then 
$$N_S(F,a) \geq \SC_M(E_F^a)-1.$$
If $F$ is reversible, then 
$$N(F,a) \geq \SC_M(E_F^a)-1.$$ 
\end{theorem}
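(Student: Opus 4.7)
The plan is to deduce the theorem directly from Theorem \ref{TheoremLS} by choosing $X = M$, $A = \Lambda^1 M$ viewed as a subset of $\Lambda_0 M$, $f = E_F$, and $G = SO(2)$ (resp.\ $G = O(2)$ when $F$ is reversible) acting on $\Lambda_0 M$ by reparametrization of loops. Since $\Lambda^1 M$ is a connected component of $H^1(S^1,M)$ contained in $\Lambda_0 M$, it is $G$-invariant, and it contains $c_1(M)$ because constant loops are of Sobolev class $H^1$. Moreover $M$ is a closed smooth manifold, hence a metrizable ANR, and $E_F|_{c_1(M)} \equiv 0$, $E_F \geq 0$ on all of $\Lambda^1 M$, and $E_F$ is invariant under the reparametrization action by the symmetries of the integrand. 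Non-constant critical $G$-orbits of $E_F$ correspond bijectively to $G$-orbits of non-constant closed geodesics of $F$, so a lower bound on $\nu(E_F,\varphi,a)$ provided by Theorem \ref{TheoremLS} immediately yields the bound on $N_S(F,a)$ (resp.\ $N(F,a)$) in the statement.

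The homotopy equivalence $\varphi$ is constructed as the time-$1$ map of a negative pseudo-gradient flow of $E_F$ on the Hilbert manifold $\Lambda^1 M$. Since $E_F$ is of class $C^{1,1}$ and satisfies the Palais--Smale condition by Mercuri's result cited in the text, the standard deformation lemma produces such a flow that is globally defined in forward time (using $E_F \geq 0$), so that $\varphi$ is well-defined, a homotopy equivalence, and $\Fix \varphi$ coincides with the set of critical points of $E_F$, i.e.\ with the set of closed geodesics of $F$. In particular $\Fix \varphi$ is $G$-invariant. Condition (D1) is immediate from the definition of a negative pseudo-gradient flow, since $E_F$ is strictly decreasing along non-stationary trajectories. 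Condition (D2) is the usual Palais--Smale-type deformation axiom: if $E_F$ is bounded on a set $B$ but $E_F(y) - E_F(\varphi(y))$ is not bounded away from zero on $B$, choosing a sequence $(y_n) \subset B$ realising the infimum produces a Palais--Smale sequence by the construction of the pseudo-gradient, which by the Palais--Smale condition subconverges to a point of $\Fix \varphi \cap \bar{B}$.

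The one remaining hypothesis of Theorem \ref{TheoremLS} is that $E_F(\Fix \varphi)$ be isolated in $\RR$. If the set of critical values of $E_F$ below $a$ is finite, this is automatic; and if it is infinite, then in particular $N_S(F,a) = \infty$ (there are then infinitely many $G$-orbits of closed geodesics below level $a$ by the Palais--Smale compactness of critical sublevel sets together with invariance under the $G$-action), so the inequality $N_S(F,a) \geq \SC_M(E_F^a)-1$ holds trivially. Thus we may reduce to the case where the set of critical values in question is finite and therefore isolated.

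The main technical point is the construction of the pseudo-gradient flow and the verification of (D2), but both are standard consequences of the $C^{1,1}$-regularity and Palais--Smale condition for the Finsler energy; all the remaining checks are routine from the formal properties of $E_F$ and of the reparametrization action. With all hypotheses in place, Theorem \ref{TheoremLS} applied to $f = E_F$, $\varphi$, and the chosen group $G$ gives
\[
\nu(E_F, \varphi, a) \;\geq\; \SC_M(E_F^a) - 1,
\]
and since every non-constant critical $G$-orbit in $E_F^a$ corresponds to an $SO(2)$-orbit (resp.\ $O(2)$-orbit in the reversible case) of a non-constant closed geodesic of length $\leq \sqrt{a}$, this is exactly the claimed inequality.
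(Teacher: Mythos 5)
Your proposal is correct and takes essentially the same route as the paper, which does not reprove this statement but imports it from the author's earlier work and indicates exactly the derivation you give: apply Theorem \ref{TheoremLS} to $f=E_F$ (a $G$-invariant, bounded-below $C^{1,1}$ functional, constant on $c_1(M)$, satisfying Palais--Smale) with $\varphi$ the time-$1$ map of a negative (pseudo-)gradient flow, whose fixed-point set is the set of closed geodesics, and then identify non-constant critical $G$-orbits with $SO(2)$- resp.\ $O(2)$-orbits of closed geodesics. The only point to tighten is your case distinction for the hypothesis that $E_F(\Fix \varphi)$ be isolated in $\RR$: finiteness of the critical values below $a$ does not rule out accumulation of critical values above $a$, so one should either restrict all the data to a sublevel set slightly above $a$ before invoking Theorem \ref{TheoremLS} or note that only the critical values below the level under consideration matter, which is how the paper itself treats the analogous issue in Corollary \ref{CorOrbits}.
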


Thus, to ensure the existence of a certain number of closed geodesics in a sublevel set of $E_F$ it suffices to find corresponding lower bounds on the number $\SC_M(E_F^a)$ which we will do using the cohomology rings of $\Lambda_0M$. Throughout this note, we will always let $H^*$ denote singular cohomology.

\begin{definition}[{\cite{FarberGrantWeights}}]
Let $p:E \to B$ be a fibration, $A$ be an abelian group and let $u\in H^*(B;A)$ with $u \neq 0$. The \emph{sectional category weight of $u$}, denoted by $\wgt_p(u)$, is the largest $k \in \NN_0$, such that $f^*u=0$ for all continuous maps $f:X \to B$ with $\secat(f^*p)\leq k$, where $X$ is any topological space.
\end{definition}

The key observation about the sectional category weight of a fibration $p$ is the following, see \cite{FarberGrantWeights} for a proof: \\

\emph{ If there exists $u \in H^*(B;A)$ with $\wgt_p(u)\geq k$, then $\secat(p) \geq k+1$.}\\

We want to apply this to the fibration $r:BX \to \Lambda_0X$ and denote its weight function by
$$\wgt_1:=\wgt_r: H^*(\Lambda_0 M;R) \to \NN_0 \cup \{+\infty\}.$$
The following is a special case of \cite[Theorem 4.2]{SpherComp}:
\begin{prop}
\label{Propwgtprops}
Let $A$ be an abelian group and let $u \in H^*(\Lambda_0X;A)$ with $u\neq 0$.
\begin{enumerate}[a)]
\item $\wgt_1(u)\geq 1$ if and only if $c_1^*u=0 \in H^*(X;A)$.
\item If $A$ is a commutative ring and $v \in H^*(\Lambda_0X;A)$ with $u\cup v \neq 0$, then 
$$\wgt_1(u \cup v) \geq \wgt_1(u)+\wgt_1(v).$$
\item If $f:Y \to \Lambda_0X$ is continuous and $f^*u\neq 0$, then $\wgt_{f^*r}(f^*u)\geq \wgt_1(u)$.
\item Let $r_0$ be the zero map, $r_1:= r$ and let $r_n:B_nX \to \Lambda_0X$ be the $n$-fold fiberwise join of $r$ with itself for each $n \geq 2$. Then 
$$\wgt_1(u) = \sup\{ n \in \NN \ | \ r_n^*u=0\}.$$
\end{enumerate}
\end{prop}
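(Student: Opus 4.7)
The plan is to treat the four properties separately, since (b), (c), and (d) are formal consequences of the general Farber–Grant theory of sectional category weight, while (a) depends on the geometry of the specific fibration $r: BX \to \Lambda_0 X$. Parts (c) and (d) are essentially tautological and should come first to make (b) easier to justify.

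For (c), I would simply observe that pulling back the fibration along $f: Y \to \Lambda_0 X$ does not change the class of fibrations $(f^*r)^*$-pulling-back maps admit: if $g: Z \to Y$ satisfies $\secat(g^*(f^*r)) \leq k$, then $\secat((f\circ g)^* r) \leq k$ as well, so $\wgt_1(u) \geq k$ forces $(f \circ g)^* u = g^* f^* u = 0$, giving $\wgt_{f^*r}(f^*u) \geq k$. For (d), I would invoke the Schwarz characterization of sectional category via fiberwise joins: $\secat(h^*r) \leq n$ holds for $h: Y \to \Lambda_0 X$ if and only if $h$ factors through $r_n$ up to homotopy, so $r_n^*u = 0$ is equivalent to $h^*u = 0$ for every such $h$, which is precisely the statement $\wgt_1(u) \geq n$. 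For (b), I would then use (d): writing $m = \wgt_1(u)$ and $n = \wgt_1(v)$, the cup product $u \cup v$ pulls back to zero under $r_{m+n}$ because the $(m+n)$-fold fiberwise join factors through the join $B_m X \star_{\Lambda_0X} B_n X$, on which $u$ restricts to zero from the first factor and $v$ from the second; standard relative-cohomology reasoning (as in \cite{FarberGrantWeights}) then forces the cup product to vanish.

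The genuinely non-formal part is (a), where I would exploit that $r:BX \to \Lambda_0 X$ is, up to homotopy, a fibration with a canonical section over the constant loops. Concretely, let $\tilde c_1: X \to BX$ send $x$ to the constant disk map at $x$; then $r \circ \tilde c_1 = c_1$, so $\tilde c_1$ is a global section of $c_1^*r$, yielding $\secat(c_1^*r) \leq 1$. Hence if $\wgt_1(u) \geq 1$ then $c_1^*u = 0$. Conversely, assume $c_1^*u = 0$ and take any $f: Y \to \Lambda_0 X$ with $\secat(f^*r) \leq 1$, i.e.\ admitting a lift $\tilde f: Y \to BX$ with $r \circ \tilde f = f$. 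The evaluation $\ev_0: BX \to X$ at the center of the disk satisfies $\tilde c_1 \circ \ev_0 \simeq \id_{BX}$ via the shrinking homotopy $H_s(\sigma)(z) = \sigma((1-s)z)$, so
\[
f = r \circ \tilde f \simeq r \circ \tilde c_1 \circ \ev_0 \circ \tilde f = c_1 \circ (\ev_0 \circ \tilde f),
\]
and therefore $f^*u = (\ev_0 \circ \tilde f)^* c_1^* u = 0$, which gives $\wgt_1(u) \geq 1$.

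The main obstacle I anticipate is ensuring that the fiberwise join characterization (d) is applied with the correct indexing convention matching the definition of $\wgt_1$ used here; once that bookkeeping is fixed, the multiplicativity (b) follows from the general theory and only (a) requires the disk-shrinking homotopy specific to the setup.
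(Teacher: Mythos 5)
Your argument is essentially correct, but note that the paper does not prove this proposition at all: it is quoted as a special case of \cite[Theorem 4.2]{SpherComp}, so your proposal reconstructs a proof rather than paralleling one given here. What you write is the standard Farber--Grant/Schwarz argument and it goes through: (c) is the tautological pullback argument; (d) is Schwarz's fiberwise-join characterization ($\secat(h^*r)\leq n$ iff $h$ lifts through $r_n$ up to homotopy, plus $\secat(r_n^*r)\leq n$ from the canonical cover of the join); (b) follows from (d) via $B_{m+n}X \simeq B_mX \ast_{\Lambda_0X} B_nX$ and the usual relative-cohomology product trick; and in (a) your disk-shrinking homotopy $\tilde c_1\circ \ev_0 \simeq \id_{BX}$ is exactly the homotopy $r\simeq c_1\circ e_1$ that the paper itself invokes (from the proof of \cite[Theorem 1.13]{SpherComp}) in the proof of Lemma \ref{LemmaWeightMV}, only with evaluation at the center instead of at the boundary point $1$ -- both work since $B^2$ is convex. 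Two small points deserve attention. First, in (d) the implication ``$r_n^*u=0 \Rightarrow \wgt_1(u)\geq n$'' requires turning $n$ local sections of $f^*r$ over an open cover of $Y$ into a lift of $f$ through $r_n$, which uses a partition of unity subordinate to that cover; since the weight as defined quantifies over arbitrary test spaces $Y$, you should either assume the covers numerable (as is implicit in the cited sources) or note that in all applications here $Y$ is metrizable, hence paracompact. Second, as you anticipate, the indexing must be fixed so that $\wgt_1(u)=0$ corresponds to the empty supremum when $r^*u\neq 0$; with those conventions settled, the proof is complete and matches what the cited theorem asserts.
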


Part b) of Proposition \ref{Propwgtprops} shows that non-vanishing cup products can produce classes of weight two or bigger. In addition, the following lemma provides another criterion, that does not involve the cup product, for cohomology classes to have weight two or bigger.  In the following, let $\Lambda^2X := C^0(S^2,X)$.
 
\begin{lemma}
\label{LemmaWeightMV}
Let $X$ be a Hausdorff space, $A$ be an abelian group, $k \geq 2$ and $u \in H^k(\Lambda_0X;A)$ with $u \neq 0$. If $u \in \ker c_1^*$ and $H^{k-1}(\Lambda^{2}X;A)=0$, then $\wgt_1(u)\geq 2$.
\end{lemma}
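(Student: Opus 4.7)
My plan is to apply part (d) of Proposition \ref{Propwgtprops}, which reduces the assertion $\wgt_1(u)\geq 2$ to the single vanishing statement $r_2^* u = 0$ in $H^k(B_2 X; A)$. The 2-fold fiberwise join can be described concretely as
$$B_2 X \;=\; \bigl((BX \times_{\Lambda_0 X} BX) \times [0,1]\bigr)\big/\sim,$$
where $\sim$ collapses the second factor of the pullback at $t = 0$ and the first factor at $t = 1$, and $r_2$ sends a class $[f_1, f_2, t]$ to the common boundary loop $f_1|_{S^1} = f_2|_{S^1}$.

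I would cover $B_2 X$ by the open sets $V_1 = \{[f_1, f_2, t] : t < 2/3\}$ and $V_2 = \{[f_1, f_2, t] : t > 1/3\}$. Linearly driving $t$ toward $0$ on $V_1$, and toward $1$ on $V_2$, yields deformation retractions of $V_1$ and $V_2$ onto the respective slices at $t = 0$ and $t = 1$, each of which is naturally identified with a copy of $BX$. These retractions fix the underlying boundary loop, so $r_2|_{V_i}$ is homotopic to $r$ as a map into $\Lambda_0 X$ for $i = 1, 2$. Since $u \in \ker c_1^*$, part (a) of Proposition \ref{Propwgtprops} already gives $\wgt_1(u) \geq 1$, i.e.\ $r^* u = 0$; hence both restrictions of $r_2^* u$ vanish, and the Mayer-Vietoris sequence
$$H^{k-1}(V_1 \cap V_2; A) \xrightarrow{\delta} H^k(B_2 X; A) \longrightarrow H^k(V_1; A) \oplus H^k(V_2; A)$$
forces $r_2^* u = \delta(w)$ for some $w \in H^{k-1}(V_1 \cap V_2; A)$.

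The decisive step is to identify this intersection. The set $V_1 \cap V_2$ deformation retracts onto the slice $\{t = 1/2\}$, which is canonically homeomorphic to $BX \times_{\Lambda_0 X} BX$. The latter is in turn homeomorphic to $\Lambda^2 X$ via the gluing map $(f_1, f_2) \mapsto f_1 \cup f_2$, where $S^2 = B^2 \cup_{S^1} B^2$ is regarded as the union of its two hemispheres along the equator. Consequently
$$H^{k-1}(V_1 \cap V_2; A) \;\cong\; H^{k-1}(\Lambda^2 X; A) \;=\; 0$$
by hypothesis, so $w = 0$ and $r_2^* u = 0$, as required.

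The main obstacle I anticipate is purely bookkeeping: one must verify that the described retractions of $V_1$ and $V_2$ are genuinely maps over $\Lambda_0 X$, and that the open cover $\{V_1, V_2\}$ is well-behaved enough for the singular Mayer-Vietoris sequence to apply, which is where the Hausdorff assumption on $X$ enters. Beyond this, the argument is essentially a topological encoding of the geometric fact that two disks sharing a boundary loop glue into a $2$-sphere.
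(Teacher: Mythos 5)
Your argument is correct and takes essentially the same route as the paper: reduce to showing $r_2^*u=0$ via part d) of Proposition \ref{Propwgtprops}, note that $c_1^*u=0$ forces $r^*u=0$, and then kill $r_2^*u$ by a Mayer--Vietoris argument for the fiberwise join whose connecting term is $H^{k-1}(\Lambda^2X;A)=0$. The only difference is presentational: you build the Mayer--Vietoris sequence by hand (explicit cover of $B_2X$ and the identification of the middle slice $BX\times_{\Lambda_0X}BX$ with $\Lambda^2X$), whereas the paper cites this sequence from \cite[Section 4.4]{SpherComp} and obtains $r^*u=0$ from the homotopy $r\simeq c_1\circ e_1$ rather than from the weight formalism.
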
 
\begin{proof}
By part d) of Proposition \ref{Propwgtprops} it suffices to show that $r_{2}^*u=0$ under the given assumptions, where $r_{2}:B_{2}X \to \Lambda_0X$ denotes the fiberwise join of $r$ with itself. We have seen in \cite[Section 4.4]{SpherComp} for a general fibration that this fiberwise join admits a Mayer-Vietoris sequence, which in our setting takes the form 
$$\xymatrix{
\dots \ar[r]  & H^{k-1}(\Lambda^{2}X;A) \ar[r]^{\delta} & H^k(B_{2}X;A) \ar[r] &  H^{k}(BX;A)\oplus H^{k}(BX;A) \ar[r] &  \dots\\
& & H^k(\Lambda_0X;A) \ar[u]_{r_2^*} \ar[ru]_{r^* \oplus r^*} & & 
}$$
Since $u \in \ker c_1^*$, and since, as shown in the proof of \cite[Theorem 1.13]{SpherComp}, there is a homotopy $r \simeq c_1 \circ e_1$, where $e_1:BX \to X$, $\gamma \mapsto \gamma(1)$, it holds that $u \in \ker r^*$. Hence, exactness of the top row yields $r_{2}^*u \in \im \delta$. But if $H^{k-1}(\Lambda^{2}X;A)=0$, then $\delta=0$ and the claim immediately follows.
\end{proof}

Throughout the following, we let $\iota_a: E_F^{<a} \hookrightarrow \Lambda_0M$
denote the inclusion of the open sublevel set for each $a \in \RR$. 

\begin{theorem}
\label{TheoremWeightCount}
Let $M$ be a closed manifold, $F:TM \to [0,+\infty)$ be a Finsler metric, $A$ be an abelian group and $a \in \RR$. If there exists $u \in H^*(\Lambda_0M;A)$ with $\wgt_1(u)\geq k$ and $\iota_a^*u \neq 0$, then  $$N_S(F,a-\varepsilon) \geq k,$$ where $\varepsilon >0$ is chosen so small that $(a-\varepsilon,a)$ contains only regular values of $E_F$. If $F$ is reversible, then $N(F,a-\varepsilon)\geq k$.
\end{theorem}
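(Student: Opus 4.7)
The plan is to chain together three ingredients already established in the excerpt: the critical orbit bound from Theorem \ref{TheoremNumberOrbits}, the inequality (\ref{EqSCrelsecat}) between $\SC_M$ and sectional category, and the defining property of the weight $\wgt_1$. The only non-formal ingredient is passing from the open sublevel set $E_F^{<a}$ (where $u$ is known to restrict nontrivially) to the closed sublevel set $E_F^{a-\varepsilon}$ (which is what Theorem \ref{TheoremNumberOrbits} bounds).

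First I would deal with this transition. Since $E_F$ is $C^{1,1}$, bounded from below, and satisfies the Palais--Smale condition, and since the interval $(a-\varepsilon,a)$ contains only regular values by the choice of $\varepsilon$, the standard deformation argument (flowing along the negative pseudo-gradient of $E_F$) shows that the inclusion $j:E_F^{a-\varepsilon}\hookrightarrow E_F^{<a}$ is a homotopy equivalence. Writing $\tilde\iota:=\iota_a\circ j:E_F^{a-\varepsilon}\hookrightarrow \Lambda_0M$, it follows that
\[
\tilde\iota^*u = j^*(\iota_a^*u) \neq 0 \in H^*(E_F^{a-\varepsilon};A).
\]

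Next I would upgrade this nontriviality to a lower bound on $\SC_M(E_F^{a-\varepsilon})$. By Proposition \ref{Propwgtprops}(c) applied to $\tilde\iota$, the pullback satisfies
\[
\wgt_{\tilde\iota^*r}(\tilde\iota^*u) \;\geq\; \wgt_1(u) \;\geq\; k.
\]
The key property of sectional category weight recalled after its definition then gives $\secat(\tilde\iota^*r) \geq k+1$. Combining this with the inequality (\ref{EqSCrelsecat}) for $A=E_F^{a-\varepsilon}$ yields
\[
\SC_M(E_F^{a-\varepsilon}) \;\geq\; \secat(\tilde\iota^*r) \;\geq\; k+1.
\]

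Finally, Theorem \ref{TheoremNumberOrbits} provides
\[
N_S(F,a-\varepsilon) \;\geq\; \SC_M(E_F^{a-\varepsilon}) - 1 \;\geq\; k,
\]
which is the desired bound; the reversible case follows identically by invoking the $N(F,\cdot)$ statement of Theorem \ref{TheoremNumberOrbits}. The only step with any potential for trouble is the very first one, where one needs the inclusion $E_F^{a-\varepsilon}\hookrightarrow E_F^{<a}$ to be a homotopy equivalence; however this is a routine consequence of the Palais--Smale condition and the absence of critical values in $(a-\varepsilon,a)$, so the proof is essentially a formal assembly of the tools already set up in Sections 1--2.
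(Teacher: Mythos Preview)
Your proof is correct and follows essentially the same approach as the paper's: both use Proposition \ref{Propwgtprops}(c), inequality (\ref{EqSCrelsecat}), the negative gradient flow deformation between $E_F^{a-\varepsilon}$ and $E_F^{<a}$, and Theorem \ref{TheoremNumberOrbits}. The only cosmetic difference is the order of operations---the paper first bounds $\SC_M(E_F^{<a})$ and then invokes deformation invariance of $\SC_M$, whereas you first use the homotopy equivalence $j$ to transport the nonvanishing of $\iota_a^*u$ to $E_F^{a-\varepsilon}$ and bound $\SC_M(E_F^{a-\varepsilon})$ directly; these are interchangeable.
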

\begin{proof}
If $\iota_a^*u \neq 0$, then $\wgt_{\iota_a^*r}(\iota_a^*u) \geq \wgt_1(u)\geq k$ by part c) of Proposition \ref{Propwgtprops}. Hence, $\SC_M(E_F^{<a})\geq k+1$ by \eqref{EqSCrelsecat}. Since $E_F^{<a}$ can be deformed into $E_F^{a-\varepsilon}$ via a negative gradient flow, combining the deformation invariance of $\SC_M$ (see \cite[Section 2]{SpherComp}) with Theorem \ref{TheoremNumberOrbits} yields $N_S(F,a-\varepsilon) \geq \SC_M(E_F^{<a})-1 \geq k$ and analogously $N(F,a-\varepsilon)\geq k$ in the reversible case.
\end{proof}

\begin{cor}
\label{CorOrbits}
Let $M$ be a closed manifold, $F:TM \to [0,+\infty)$ be a Finsler metric, $A$ be an abelian group, $k\in \NN$ and $a > 0$. If  there exists $u \in H^*(\Lambda_0M;A)$ with $\wgt_1(u)\geq k$ and $\iota_a^*u\neq 0$, then $F$ admits $k$ distinct $SO(2)$-orbits of non-constant contractible closed geodesics of energy less than $a$. If $F$ is reversible, then it will admit $k$ distinct $O(2)$-orbits of non-constant contractible closed geodesics of energy less than $a$.
\end{cor}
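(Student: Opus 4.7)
The corollary is essentially a repackaging of Theorem \ref{TheoremWeightCount}, so the plan is to reduce directly to that theorem and then reinterpret the conclusion with the strict inequality on energy.

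First, I would note that the hypotheses of Theorem \ref{TheoremWeightCount} are exactly those supplied by the corollary: we are given a class $u \in H^*(\Lambda_0 M;A)$ with $\wgt_1(u)\geq k$ and $\iota_a^*u \neq 0$. Applying Theorem \ref{TheoremWeightCount} therefore yields an $\varepsilon > 0$ such that $(a-\varepsilon,a)$ contains only regular values of $E_F$ and such that $N_S(F,a-\varepsilon)\geq k$, respectively $N(F,a-\varepsilon)\geq k$ in the reversible case.

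Next, I would unpack the definition of $N_S$: it counts $SO(2)$-orbits of non-constant closed geodesics whose energy is at most $a-\varepsilon$. Since $a-\varepsilon < a$, these $k$ orbits lie in $E_F^{<a}$ and so have energy strictly less than $a$. Moreover, these geodesics are elements of $\Lambda^1 M \subset \Lambda_0 M$ and hence are nullhomotopic, i.e.\ contractible, by the very definition of $\Lambda_0 M$. This gives exactly the desired $k$ distinct $SO(2)$-orbits of non-constant contractible closed geodesics of energy less than $a$. The reversible case is handled identically by replacing $N_S$ with $N$ and $SO(2)$ with $O(2)$ throughout.

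There is no real obstacle: the only subtlety is the passage from the non-strict energy bound $a-\varepsilon$ in Theorem \ref{TheoremWeightCount} to the strict energy bound $< a$ in the corollary, and this is immediate since $a-\varepsilon<a$. The corollary could equally well be viewed as just a convenient restatement of Theorem \ref{TheoremWeightCount} in the language most useful for the applications to spheres and projective spaces in the subsequent sections.
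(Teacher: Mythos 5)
There is a gap in your reduction: Theorem \ref{TheoremWeightCount} does not \emph{yield} an $\varepsilon>0$ such that $(a-\varepsilon,a)$ contains only regular values of $E_F$; its conclusion is formulated for an $\varepsilon$ \emph{chosen} with that property, so it is applicable only when such an $\varepsilon$ exists. If the critical values of $E_F$ accumulate at $a$ from below (more generally, if they are non-isolated in $(-\infty,a)$), no such $\varepsilon$ exists and your appeal to the theorem breaks down. Your proposal never addresses this possibility, so as written it does not cover all cases.

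The paper closes exactly this hole by a case distinction: if the critical values are non-isolated in $(-\infty,a)$, there are infinitely many positive critical values below $a$, and since each positive critical value carries at least one $SO(2)$-orbit (resp.\ $O(2)$-orbit) of non-constant closed geodesics, while distinct critical values give distinct orbits, one trivially obtains $k$ orbits of energy less than $a$; only in the remaining case, where an interval $(a-\varepsilon,a)$ free of critical values can be chosen, does it invoke Theorem \ref{TheoremWeightCount}. The rest of your argument (the passage from energy $\leq a-\varepsilon$ to energy $<a$, and contractibility of the geodesics because they lie in $\Lambda^1M\subset\Lambda_0M$) is fine; adding the case distinction would make the proof complete and essentially identical to the paper's.
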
 
\begin{proof}
If the critical values of $E_F$ are non-isolated in $(-\infty,a)$, then the statement is obvious. If they are isolated, then pick $\varepsilon>0$ such that $(a-\varepsilon,a)$ does not contain critical values of $E_F$. The statement then follows from Theorem \ref{TheoremWeightCount}.
\end{proof}

The main significance of Corollary \ref{CorOrbits} is that one might employ it to detect distinct closed geodesics. The following definition makes this notion precise. 

\begin{definition}
Let $(M,F)$ be a Finsler manifold and let $\gamma_1,\gamma_2 \in \Lambda M$ be closed geodesics of $F$. 
\begin{enumerate}
\item We call $\gamma_1$ and $\gamma_2$ \emph{geometrically distinct} if $\gamma_1(S^1) \neq \gamma_2(S^1)\subset M$. 
\item We call $\gamma_1$ and $\gamma_2$ \emph{positively distinct} if they are either geometrically distinct or they lie in the same $O(2)$-orbit of $\Lambda M$, but not in the same $SO(2)$-orbit. 
\end{enumerate}
We will further let $\ell_F>0$ denote the length of the shortest non-constant closed geodesic of $F$.
\end{definition}

We want to show that under certain conditions on $M$ and $F$, the Finsler metric admits multiple \emph{positively distinct} closed geodesics and even \emph{geometrically distinct} ones if $F$ is reversible. We will do so using the following line of argument.\bigskip
 
\textbf{Strategy.} \enskip Let $M$ be a closed manifold and $F:TM \to [0,+\infty)$ be a Finsler metric that admits a contractible closed geodesic, e.g. for $M$ simply connected. 
\begin{enumerate}[1.]
\item Find an abelian group $A$, $k \geq 2$ and $u \in H^*(\Lambda_0M;A)$ with $\wgt_1(u)\geq k$. 
\item Find $r \in \NN$ and $a \in \RR$ so small that $E_F^{<a}$ does not contain any $m$-fold iterated closed geodesic for $m>r$ and show that $\iota_a^*u\neq 0$.
\end{enumerate}
Then there will be $\left\lceil \frac{k}{r} \right\rceil$ positively distinct closed geodesics in $E_F^{<a}$. If $F$ is reversible, then there will be $k$ geometrically distinct closed geodesics in $E^{<a}_F$. \\

\textit{Comment on the strategy.} \enskip Every closed geodesic $\gamma$ of $F$ satisfies $E_F(\gamma) \geq \ell_F^2$. Thus, for each $m \geq 2$, the $m$-fold iterate $\gamma^m$ of each closed geodesic satisfies
$$E_F(\gamma^m) = m^2E_F(\gamma)\geq m^2\ell_F^2.$$
Thus, if $a \leq (r+1)^2\ell_F^2$, then the sublevel set $E_F^{<a}$ can contain at most $r$-fold iterated closed geodesics. Consequently, if we can ensure that $E^{<a}_F$ contains $k$ distinct $SO(2)$-orbits, the existence of $\left\lceil \frac{k}{r} \right\rceil$ distinct closed geodesics immediately follows since each closed geodesics can contribute at most $r$ times to the count of the $SO(2)$-orbits.

\bigskip 
 Before we start to consider concrete examples, we want to conclude a section by presenting a result of Rademacher that we will use in all of the following examples.  In the above line of argument, we were searching for energy levels $a$ that satisfy inequalities of the form $a \leq r^2 \ell_F^2$ for suitable $r\in \NN$. To find such numbers, we will need lower bounds for the lengths $\ell_F$, which have been established by Rademacher for Finsler metrics of positive curvature. 
 
\begin{theorem}[{\cite[Theorem 1]{RadeSphere}, \cite[Theorem 4]{RadeSphere}}]
\label{TheoremRade}
Let $M$ be a closed simply-connected manifold and let $F:TM \to [0,+\infty)$ be a Finsler metric of reversibility $\lambda$ whose flag curvature satisfies $0<K \leq 1$. If one of the following holds:
\begin{enumerate}[a)]
\item $\dim M$ is even, 
\item $K > \frac{\lambda^2}{(1+\lambda)^2}$ and $\dim M \geq 3$,
\end{enumerate}
then $\ell_F \geq \pi\frac{1+\lambda}{\lambda}$.
\end{theorem}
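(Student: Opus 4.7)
The plan is to combine two comparison-geometric tools for Finsler manifolds, with the reversibility constant $\lambda$ entering as a correction factor throughout. The first tool is a Rauch-type conjugate radius estimate: the upper bound $K \leq 1$ together with the Finsler index form along geodesics forces the conjugate radius of $(M,F)$ to be at least $\pi$, so no geodesic segment of $F$-length less than $\pi$ contains a pair of conjugate points. This estimate is essentially a direct transcription of the Riemannian Rauch theorem to the Finsler setting and does not yet feel the reversibility. The second tool is a lower bound for the length of a shortest closed geodesic, which in the two cases of the theorem requires two genuinely different arguments.

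For case (a), where $\dim M$ is even, I would run a Synge-type argument. Suppose $\gamma:[0,L]\to M$ realizes $\ell_F$ with $L<\pi(1+\lambda)/\lambda$. The linearized Poincar\'e map of the geodesic flow along $\gamma$, restricted to the orthogonal complement of $\dot\gamma$, preserves orientation and acts on an odd-dimensional space, so it fixes a direction; parallel transport along $\gamma$ then produces a parallel normal vector field $V$. Plugging $V$ into the Finsler second-variation formula and estimating with $K\leq 1$ yields a negative second variation once $L$ exceeds the asserted bound, contradicting minimality. The factor $(1+\lambda)/\lambda$ appears because, in contrast with the Riemannian case, one must compare the second variation of $\gamma$ in the direction $V$ with that of the reversed curve in the direction $-V$, and the asymmetry $F(x,-v)\leq \lambda F(x,v)$ introduces the reversibility multiplicatively.

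For case (b), the Synge trick is unavailable, and the stronger pinching hypothesis $K>\lambda^2/(1+\lambda)^2$ substitutes for it via a Klingenberg-type injectivity radius argument. Again assume $L<\pi(1+\lambda)/\lambda$; since $L/2$ lies below the conjugate radius, any relevant self-intersection of $\gamma$ in the universal cover must be realized by two distinct minimizing geodesics of equal length between a pair of points on $\gamma$. A Toponogov-type comparison with the model space of constant flag curvature $\lambda^2/(1+\lambda)^2$ lets one splice these minimizers into a nontrivial closed geodesic strictly shorter than $\gamma$, contradicting the fact that $\gamma$ realizes $\ell_F$. Once more the reversibility enters via the conversion of reversed-curve lengths in the splicing step.

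The main obstacle throughout is bookkeeping for nonreversibility: every Riemannian identity of the form \emph{``length of a curve equals length of its reverse''} must be replaced by the one-sided inequality $F(\bar{\alpha})\leq \lambda F(\alpha)$, and each intermediate estimate has to be tracked to see where $\lambda$ enters multiplicatively. The final bound $\pi(1+\lambda)/\lambda$ is essentially optimal: it reduces to the classical Klingenberg bound $2\pi$ when $\lambda=1$, and it is known to be realized by the Katok-type nonreversible Finsler metrics on spheres.
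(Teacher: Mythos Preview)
The paper does not prove this theorem; it is quoted from Rademacher's papers \cite{RadeSphere} and used as a black box in the later sections. There is therefore no ``paper's own proof'' to compare against.

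That said, your outline has the correct architecture --- a Synge-type argument for case~(a) and a Klingenberg-type injectivity/cut-locus argument for case~(b) is indeed how Rademacher proceeds --- but the mechanism you describe in case~(a) is garbled on the central point. With a parallel normal field $V$ along a closed geodesic the second variation is essentially $-\int_0^L K(\dot\gamma,V)\,dt$, which is negative from $K>0$ alone; it does not use $K\leq 1$, and it does not depend on whether $L$ lies above or below any threshold. So the sentence ``estimating with $K\leq 1$ yields a negative second variation once $L$ exceeds the asserted bound'' is wrong on both counts (and also contradicts your own hypothesis $L<\pi(1+\lambda)/\lambda$). What Synge actually delivers is that \emph{no} closed geodesic is a local minimum of the energy. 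The length bound and the upper curvature bound enter through a different door: $K\leq 1$ gives conjugate radius $\geq \pi$, and then a Finsler version of Klingenberg's cut-locus lemma says that if the shortest closed geodesic $\gamma$ had length below $\pi(1+\lambda)/\lambda$, it could be realised as a geodesic digon of two minimizing arcs and hence \emph{would} be a local minimum of energy --- contradicting Synge. The reversibility factor appears in this digon/cut-locus step, because the two arcs of $\gamma$ are traversed with opposite orientations and their $F$-lengths are related through $\lambda$; it does not come from comparing the second variation of $\gamma$ with that of its reverse, as you suggest. Your case~(b) sketch is closer to the mark in spirit, though ``Toponogov-type comparison'' is not really what drives Rademacher's argument there either; it is again the interplay between the conjugate radius bound and the structure of the cut locus, with the stronger pinching replacing the Synge fixed-vector step.
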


\section{Even-dimensional spheres}
Let $n \in \NN$, let $g_1$ denote the round Riemannian metric on $S^n$ of constant curvature $1$ and let $E_1:\Lambda^1 S^n \to \RR$ denote its energy functional. For every $a \in \RR$ we let $\Lambda^{\leq a}S^n:= E_1^{-1}((-\infty,a])$ denote the corresponding closed sublevel set of $E_1$. 

The integral homology groups of free loop spaces of globally symmetric spaces can be computed using Morse-Bott theory of the energy functional, see \cite{ZillerFree}, which we want to discuss explicitly for $(S^n,g_1)$, see also \cite[Section 7]{OanceaGeod} for another discussion of this case. Using the results of \cite{ZillerFree}, we first give formulas for the cohomology groups of $\Lambda S^n$ with field coefficients.

\begin{prop}
\label{PropCohomSn}
Let $T_1S^n$ denote the unit tangent bundle of $S^n$ with respect to $g_1$ and let $\F$ be a field. Then for each $i \in \NN_0$ there are isomorphisms of $\F$-vector spaces
\begin{align}
H^i(\Lambda^{\leq (2\pi m)^2}S^n;\F) &\cong H^i(S^n;\F)\oplus \bigoplus_{k=1}^m H^{i-(2k-1)(n-1)}(T_1S^n;\F) \qquad \forall m \in \NN, \label{EqHSnsub} \\ 
H^i(\Lambda S^n;\F) &\cong H^i(S^n;\F)\oplus \bigoplus_{k=1}^\infty H^{i-(2k-1)(n-1)}(T_1S^n;\F). \label{EqHSn}
\end{align}
\end{prop}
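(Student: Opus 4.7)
The plan is to apply Morse--Bott theory to the energy functional $E_1$ of $g_1$, closely following Ziller \cite{ZillerFree}. First I would identify the critical set of $E_1:\Lambda S^n \to \RR$. Its critical points are precisely the closed $g_1$-geodesics parametrized proportionally to arc length, and they split into two types: the constant loops, which form a critical manifold diffeomorphic to $S^n$ at energy level $0$; and for each $k \geq 1$, the $k$-fold iterates of prime great circles, which form a critical manifold $B_k$ at energy level $(2\pi k)^2$. Since a $k$-fold iterated great circle with constant-speed parametrization is determined by its base point and initial unit velocity, the evaluation map $\gamma \mapsto (\gamma(0), \dot\gamma(0)/|\dot\gamma(0)|)$ yields a diffeomorphism $B_k \cong T_1 S^n$. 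A Jacobi field count shows that the Morse index of $B_k$ equals $(2k-1)(n-1)$, because the underlying great circle has $2k-1$ interior conjugate points, each of multiplicity $n-1$.

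Next I would apply Morse--Bott theory. Since $E_1$ satisfies the Palais--Smale condition and its critical values $\{(2\pi k)^2\}_{k\in\NN_0}$ are isolated, standard handle-attachment arguments show that $\Lambda^{\leq (2\pi m)^2} S^n$ is homotopy equivalent to the space obtained from $\Lambda^{\leq 0}S^n\cong S^n$ by successively attaching, for $k=1,\dots,m$, the disk bundle of the negative normal bundle $\nu_k \to B_k$ along its sphere bundle. The Thom isomorphism then identifies the cohomological contribution of $B_k$ in degree $i$ with $H^{i-(2k-1)(n-1)}(T_1 S^n;\F)$, provided $\nu_k$ is $\F$-orientable. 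This orientability can be verified using the $SO(n+1)$-symmetry of $B_k$ (the induced action covers the bundle map), and is automatic in characteristic $2$.

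The main obstacle is the collapse of the resulting Morse--Bott spectral sequence at the $E_1$-page, which is what allows one to actually split $H^i$ of the sublevel set as a direct sum of these Thom contributions. Here I would appeal to \cite{ZillerFree}: Ziller verifies this collapse by comparing the total contributions predicted by the Thom isomorphism with the known graded dimensions of $H^*(\Lambda S^n;\F)$ obtained from the Serre spectral sequence of the evaluation fibration $\Omega S^n \hookrightarrow \Lambda S^n \to S^n$, using that $H^*(\Omega S^n;\F)$ is polynomial on a class of degree $n-1$ when $n$ is odd, and is the tensor product of an exterior algebra on a class of degree $n-1$ with a polynomial algebra on a class of degree $2(n-1)$ when $n$ is even; the total dimensions match summand by summand, forcing the Morse--Bott inequalities to be equalities. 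Granting collapse, \eqref{EqHSnsub} follows immediately. Finally, \eqref{EqHSn} follows from \eqref{EqHSnsub} by passing to the colimit $m \to \infty$: in every fixed degree $i$ only finitely many summands (those with $(2k-1)(n-1)\leq i$) contribute, so the groups $H^i(\Lambda^{\leq (2\pi m)^2}S^n;\F)$ stabilize for large $m$, and their common value must equal $H^i(\Lambda S^n;\F)$.
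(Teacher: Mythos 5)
Your proposal is essentially sound, but it takes a genuinely different route from the paper. You re-derive the Morse--Bott computation from scratch: critical manifolds $S^n$ (constant loops) and $B_k\cong T_1S^n$ at level $(2\pi k)^2$ with index $(2k-1)(n-1)$, Thom isomorphisms for the negative bundles, and collapse of the Morse--Bott spectral sequence, the latter deferred to Ziller. The paper instead uses \cite{ZillerFree} as a black box at the level of \emph{integral homology}: it quotes the splitting $H_i(\Lambda^{\leq(2\pi m)^2}S^n;\ZZ)\cong H_i(S^n;\ZZ)\oplus\bigoplus_{k=1}^m H_{i-(2k-1)(n-1)}(T_1S^n;\ZZ)$ (and its analogue for $\Lambda S^n$) and then performs a purely algebraic translation: tensoring with $\F$ via the universal coefficient theorem, dualizing through $H^*(X;\F)\cong\Hom_\F(H_*(X;\F),\F)$, and noting that, since $T_1S^n$ is a closed manifold, only finitely many factors of the resulting direct product are nonzero in each fixed degree, so the product is a direct sum. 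Your geometric route has the merit of explaining where the shifts $(2k-1)(n-1)$ and the factors $T_1S^n$ come from, but it obliges you to settle the $\F$-orientability of the negative bundles in odd characteristic (automatic for $n\geq 3$, where $T_1S^n$ is simply connected, and verified by Ziller in general) and to justify the final passage to $\Lambda S^n$: singular cohomology does not commute with increasing unions on the nose, so that step is cleaner done as in the paper --- at the level of homology, which does commute with the exhaustion by sublevel sets (compact subsets have bounded energy), followed by dualization over the field --- or else via a $\lim^1$/telescope argument using that the restriction maps stabilize in each degree. These are standard repairs rather than genuine gaps; the paper's proof simply sidesteps them by staying entirely algebraic once Ziller's integral statement is invoked.
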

\begin{proof}
It is proven in \cite{ZillerFree} that for each $i \in \NN_0$ there are group isomorphisms 
\begin{align}
H_i(\Lambda^{\leq (2\pi m)^2}S^n;\ZZ) &\cong H_i(S^n;\ZZ)\oplus \bigoplus_{k=1}^m H_{i-(2k-1)(n-1)}(T_1S^n;\ZZ) \qquad \forall m \in \NN,  \label{Eq1} \\ 
H_i(\Lambda S^n;\ZZ) &\cong H_i(S^n;\ZZ)\oplus \bigoplus_{k=1}^\infty H_{i-(2k-1)(n-1)}(T_1S^n;\ZZ). \label{Eq2}
\end{align}
Since $\F$ is a field, the universal coefficient theorem yields an isomorphism of $\F$-vector spaces $H_*(X;\F)\cong H_*(X;\ZZ) \otimes \F$ for every topological space $X$. Hence, tensoring both sides of the isomorphisms \eqref{Eq1} and \eqref{Eq2} with $\F$, we obtain that the analogous statements hold $\ZZ$ replaced by $\F$ and with respect to isomorphisms of $\F$-vector spaces. 

We further derive from the universal coefficient theorem in cohomology that 
$$H^*(X;\F) \cong \Hom_{\F}(H_*(X;\F),\F)$$ 
for any topological space $X$, so we obtain a dual isomorphism
\begin{align*}
H^i(\Lambda S^n;\F) &\cong \Hom_{\F}( H_i(\Lambda S^n;\F),\F) \\ 
&\cong \Hom_\F(H_i(S^n;\F),\F) \times  \prod_{k=1}^\infty \Hom_{\F}(H_{i-(2k-1)(n-1)}(T_1S^n;\F),\F)  \\
&\cong H^i(S^n;\F) \times \prod_{k=1}^\infty H^{i-(2k-1)(n-1)}(T_1S^n;\F)
\end{align*}
for each $i \in \NN_0$. Since $T_1S^n$ is a finite-dimensional manifold, $H^{i-(2k-1)(n-1)}(T_1S^n;\F)$ is finite-dimensional. Thus, it will be trivial for degree reasons for any fixed $i$ and sufficiently big $k \in \NN$, so the direct product in the above formula contains only finitely many non-trivial factors for each $i \in \NN_0$. Hence, it is isomorphic to the corresponding direct sum, yielding the isomorphism \eqref{EqHSn}. The isomorphisms \eqref{EqHSnsub} are derived along the same lines.
\end{proof}

It further follows from the Morse-theoretic construction underlying these isomorphisms that the maps $c_1^*:H^i(\Lambda S^n;\F)\to H^i(S^n;\F)$ and $j_{(2\pi m)^2}^*:H^i(\Lambda S^n;\F) \to H^i(\Lambda^{\leq (2\pi m)^2}S^n;\F)$ are the projections onto the corresponding summands under the isomorphisms in \eqref{EqHSnsub} and \eqref{EqHSn} for each $i \in \NN_0$, where $j_{(2\pi m)^2}: \Lambda^{\leq (2\pi m)^2}S^n \hookrightarrow \Lambda S^n$ denotes the inclusion for each $m \in \NN$.  \\

In the following, we consider even-dimensional spheres and odd-dimensional spheres separately, since the cohomology rings of $\Lambda S^n$ can be distinguished along these two cases.

\begin{theorem}
\label{TheoremGeodSpheresEven}
Let $n \geq 2$ and let $F:TS^{2n} \to [0,+\infty)$ be a Finsler metric on $S^{2n}$ of reversibility $\lambda$. If the flag curvature of $F$ satisfies $0 < K \leq 1$ and if
\begin{equation}
\label{EqAssumpEven}
 F_x(v) <\frac{1+\lambda}{\lambda} \sqrt{(g_1)_x(v,v)} \quad \forall (x,v) \in TS^{2n}, 
 \end{equation}
then $F$ will admit two positively distinct closed geodesics of length less than $2 \ell_F$. If $F$ is reversible, then the geodesics can be chosen geometrically distinct. 
\end{theorem}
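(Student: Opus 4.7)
The plan is to invoke Corollary~\ref{CorOrbits} with $k=2$ and iteration bound $r=1$. Concretely, the task is to produce a cohomology class $u \in H^\ast(\Lambda_0 S^{2n}; A)$ of sectional-category weight $\wgt_1(u) \geq 2$ whose restriction $\iota_{4\ell_F^2}^\ast u$ is non-zero. Granted such a $u$, the corollary supplies two non-constant $SO(2)$-orbits of contractible closed geodesics in $E_F^{<4\ell_F^2}$, and since an $m$-fold iterate of a non-trivial closed geodesic has energy at least $m^2\ell_F^2 \geq 4\ell_F^2$ for $m \geq 2$, each of these orbits corresponds to a prime closed geodesic of length less than $2\ell_F$. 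In the reversible case, the two $O(2)$-orbits are automatically geometrically distinct.

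The geometric input is routine. Theorem~\ref{TheoremRade}(a) yields $\ell_F \geq \pi(1+\lambda)/\lambda$, so $4\pi^2(1+\lambda)^2/\lambda^2 \leq 4\ell_F^2$. Integrating the pointwise hypothesis $F < \frac{1+\lambda}{\lambda}\sqrt{g_1}$ gives $E_F(\gamma) < \frac{(1+\lambda)^2}{\lambda^2}E_1(\gamma)$ for every non-constant $\gamma$, whence
\[
\Lambda^{\leq 4\pi^2} S^{2n} \;\subseteq\; E_F^{<4\ell_F^2}.
\]
Therefore $j_{4\pi^2}$ factors through $\iota_{4\ell_F^2}$, so every $u$ with $j_{4\pi^2}^\ast u \neq 0$ automatically satisfies $\iota_{4\ell_F^2}^\ast u \neq 0$. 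Proposition~\ref{PropCohomSn} applied with $\F = \QQ$ and $m=1$ produces the candidate class: the summand indexed by $k=1$ contributes the generator of $H^{4n-1}(T_1 S^{2n};\QQ) \cong \QQ$ to $H^{6n-2}(\Lambda^{\leq 4\pi^2} S^{2n};\QQ)$, and I let $\omega \in H^{6n-2}(\Lambda_0 S^{2n};\QQ)$ denote the corresponding class on the whole loop space. In the minimal Sullivan model of $\Lambda_0 S^{2n}$, whose generators $x, y, \bar x, \bar y$ have degrees $2n, 4n-1, 2n-1, 4n-2$ with differential determined by $Dy = x^2$ and $D\bar y = 2x\bar x$, the class $\omega$ is represented by $\omega = x\bar y - 2 y\bar x$. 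Because $\omega$ lies in the summand indexed by $k=1$, one has $c_1^\ast \omega = 0$, and Proposition~\ref{Propwgtprops}(a) gives $\wgt_1(\omega) \geq 1$.

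The remaining step, and the main obstacle I anticipate, is the upgrade $\wgt_1(\omega) \geq 2$. The natural tool is Lemma~\ref{LemmaWeightMV}, which reduces the question to the vanishing of $H^{6n-3}(\Lambda^2 S^{2n}; A)$ for some suitable coefficient group $A$. However, the analogous Sullivan-model computation for $\Lambda^2 S^{2n} = C^0(S^2, S^{2n})$, whose generators $\bar x, \bar y$ are shifted two degrees down rather than one (so that $|\bar x| = 2n-2$ and $|\bar y| = 4n-3$), with the same differential $D\bar y = 2 x\bar x$, produces a non-trivial cocycle $x\bar y - 2 y\bar x$ in degree $6n-3$; hence the rational hypothesis of Lemma~\ref{LemmaWeightMV} fails, and an inspection shows that the same persists for every cyclic coefficient group. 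Moreover, the ring $H^\ast(\Lambda_0 S^{2n};\QQ)$ turns out to be sparse enough that every cup product of two classes in $\ker c_1^\ast$ vanishes, so Proposition~\ref{Propwgtprops}(b) is not available either. The weight upgrade therefore requires a finer analysis based on the fiberwise-join characterisation of $\wgt_1$ from Proposition~\ref{Propwgtprops}(d): one tracks $r_2^\ast \omega \in H^{6n-2}(B_2 S^{2n}; A)$ through the Mayer-Vietoris sequence that underlies the proof of Lemma~\ref{LemmaWeightMV}, and verifies that its a priori representative in the image of the connecting map $\delta: H^{6n-3}(\Lambda^2 S^{2n}; A) \to H^{6n-2}(B_2 S^{2n}; A)$ is in fact zero. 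Once this is established, substituting $u = \omega$ and $a = 4\ell_F^2$ into Corollary~\ref{CorOrbits} completes the proof.
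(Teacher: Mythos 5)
Your overall skeleton (Corollary~\ref{CorOrbits} with $k=2$, $r=1$, Rademacher's bound $\ell_F\geq\pi(1+\lambda)/\lambda$, the energy comparison giving $\Lambda^{\leq 4\pi^2}S^{2n}\subseteq E_F^{<4\ell_F^2}$, and the primeness argument) matches the paper. But the proof has a genuine gap at precisely the point you flag: you never establish $\wgt_1(\omega)\geq 2$ for your rational class $\omega\in H^{6n-2}(\Lambda_0 S^{2n};\QQ)$. You correctly observe that Lemma~\ref{LemmaWeightMV} is unavailable because $H^{6n-3}(\Lambda^2 S^{2n};\QQ)\neq 0$ and that cup products do not help rationally, and then you merely announce that a ``finer analysis'' of $r_2^*\omega$ through the Mayer--Vietoris sequence would show it vanishes --- with no argument whatsoever. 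Since exactness only places $r_2^*\omega$ in the image of the (nonzero) connecting map $\delta$, showing it is actually zero is the entire content of the weight upgrade, and nothing you wrote delivers it. As it stands the argument proves only $\wgt_1(\omega)\geq 1$, which via Corollary~\ref{CorOrbits} yields a single closed geodesic, not two.

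The paper sidesteps this obstruction by a different choice of class and coefficients: it works with $\ZZ_2$ and takes the nonzero class $x\in H^{4n-2}(\Lambda S^{2n};\ZZ_2)$ coming from $H^{2n-1}(T_1S^{2n};\ZZ_2)\cong\ZZ_2$ (a summand that is invisible rationally, since $T_1S^{2n}$ has the rational cohomology of $S^{4n-1}$; this is why your rational approach is forced up to degree $6n-2$). For that class the hypothesis of Lemma~\ref{LemmaWeightMV} does hold: by the known computation of $H^*(C^0(S^2,S^{2n});\ZZ_2)$, the group $H^{4n-3}(\Lambda^2 S^{2n};\ZZ_2)$ vanishes, so $\wgt_1(x)\geq 2$ follows immediately, and $j_{4\pi^2}^*x\neq 0$ by the same sublevel-set computation you use. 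Incidentally, your parenthetical claim that the Mayer--Vietoris obstruction ``persists for every cyclic coefficient group'' is doubtful: with $\ZZ_2$ coefficients the degrees of the generators of $H^*(\Lambda^2 S^{2n};\ZZ_2)$ cannot sum to $6n-3$ for $n\geq 2$, so even your degree-$(6n-2)$ class could be salvaged mod $2$; in any case, switching to $\ZZ_2$ coefficients is the missing idea, and without it (or a genuine computation of $r_2^*\omega$) the proof is incomplete.
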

\begin{proof}
As shown in \cite[Example 5.4.18]{Hausmann}, it holds that
$$H^i(T_1S^{2n};\ZZ_2) \cong \begin{cases}
\ZZ_2 & \text{if } i \in \{0,2n-1,2n,4n-1\}, \\
0 & \text{else.}
\end{cases}$$
It follows from this observation and from \eqref{EqHSnsub} and \eqref{EqHSn} that $H^{4n-2}(\Lambda S^{2n};\ZZ_2) \neq 0$ and
$$H^{4n-2}(\Lambda S^{2n};\ZZ_2) \cong H^{2n-1}(T_1S^{2n};\ZZ_2)\cong H^{4n-2}(\Lambda^{\leq 4\pi^2}S^{2n};\ZZ_2).$$
Let $x \in H^{4n-2}(\Lambda S^{2n};\ZZ_2) $ with $x\neq 0$. Since $H^{4n-2}(S^{2n};\ZZ_2)=0$, it holds that $x \in \ker c_1^*$, thus $\wgt_1(x)\geq 1$.  It further follows from the previous computation that $j_{4\pi^2}^*x \neq 0$. 

To apply Lemma \ref{LemmaWeightMV} to the class $x \in H^{4n-2}(\Lambda S^{2n};\ZZ_2)$, we note that the $\ZZ_2$-cohomology groups of $\Lambda^2S^{2n}=C^0(S^2,S^{2n})$ are well-known and it follows from \cite[Theorem 18.(1)]{Salvatore} and \cite[Corollary 10.26.4(b)]{NeisenBook} that there is an isomorphism of abelian groups
\begin{align*}
H^*(\Lambda^2 S^{2n};\ZZ_2) &\cong \ZZ_2[u_{4^i(2n-1)-1} \; | \; i \in \NN_0] \oplus \ZZ_2[u_{(4^i+1)(2n-1)} \;  |\; i \in \NN_0] \\
&\cong \ZZ_2[u_{2n-2},u_{8n-5},u_{32n-17},\dots] \oplus \ZZ_2[u_{4n-2},u_{10n-5},u_{34n-17},\dots],
\end{align*}
where the index of every class denotes its degree. This particularly shows that 
$$H^{4n-3}(\Lambda^2S^{2n};\ZZ_2)=0,$$ 
so that Lemma \ref{LemmaWeightMV} yields $\wgt_1(x)\geq 2$. By assumption \eqref{EqAssumpEven}, it holds for each $\gamma \in \Lambda^1 S^{2n}$ that
$$E_F(\gamma)= \int_0^1F_{\gamma(t)}(\dot\gamma(t))^2\; dt < \frac{(1+\lambda)^2}{\lambda^2}\int_0^1 (g_1)_{\gamma(t)}(\dot\gamma(t),\dot\gamma(t))\; dt = \frac{(1+\lambda)^2}{\lambda^2} E_1(\gamma).$$
By \cite[Theorem 4]{RadeSphere}, it follows from the flag curvature bounds of $F$ that
$\ell_F \geq \frac{1+\lambda}\lambda \pi,$
so the above chain of inequalities yields
$$E_F(\gamma) < \frac{\ell_F^2}{\pi^2}E_1(\gamma) \qquad \forall \gamma \in \Lambda^1 S^{2n}.$$
In particular, this shows that $\Lambda^{\leq 4 \pi^2}S^{2n} \subset E_F^{< 4\ell_F^2}$ and it follows that  $j_{4\pi^2}$ factors via the inclusion $\iota_{4\ell_F^2}:E_F^{<4\ell_F^2} \hookrightarrow \Lambda S^{2n}$, which yields $\iota_{4\ell_F^2}^*x\neq 0$. Since $\wgt_1(x)\geq 2$, it follows from Corollary \ref{CorOrbits} that $E^{<4\ell_F^2}_F$ contains two distinct $SO(2)$-orbits of non-constant closed geodesics and even two distinct $O(2)$-orbits if $F$ is reversible. 

Since the $m$-fold iterate, $m \geq 2$, of a non-constant closed geodesic $\gamma$ of $F$ satisfies
$$E_F(\gamma^m) = m^2 E_F(\gamma) \geq m^2\ell_F^2 \geq 4\ell_F^2,$$
the open sublevel set  $E^{<4\ell_F^2}_F$ contains only prime non-constant closed geodesics. Thus, since it contains two $SO(2)$-orbits or $O(2)$-orbits, resp., of non-constant closed geodesics, it must contain two positively distinct closed geodesics of $F$ and two geometrically distinct ones if $F$ is reversible, which shows the claim.
\end{proof}

Next we will prove the existence of two closed geodesics on even-dimensional spheres dropping assumption \eqref{EqAssumpEven} and imposing a much stronger pinching condition on the flag curvature of the Finsler metric instead.

\begin{theorem}
\label{TheoremGeodSpheresEvenPinch}
Let $n \geq 2$ and let $F:TS^{2n} \to [0,+\infty)$ be a Finsler metric on $S^{2n}$ of reversibility $\lambda$. Assume that the flag curvature $K$ of $F$ satisfies 
$$ \frac{9\lambda^2}{4(1+\lambda)^2} < K \leq 1.$$
Then $F$ admits two positively distinct closed geodesics whose lengths are less than $2\ell_F$. If $F$ is reversible, then the geodesics can be chosen geometrically distinct.
\end{theorem}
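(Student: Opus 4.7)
My plan is to follow the template of Theorem \ref{TheoremGeodSpheresEven} with the very same cohomology class $x \in H^{4n-2}(\Lambda S^{2n}; \ZZ_2)$: the preceding proof already established $\wgt_1(x) \geq 2$ via Lemma \ref{LemmaWeightMV} (using the vanishing $H^{4n-3}(\Lambda^2 S^{2n}; \ZZ_2) = 0$) and $j^*_{4\pi^2} x \neq 0$. Because the pinching implies $0 < K \leq 1$, Theorem \ref{TheoremRade}(a) still yields $\ell_F \geq \pi(1+\lambda)/\lambda$. Once $\iota^*_{4\ell_F^2} x \neq 0$ is verified, Corollary \ref{CorOrbits} produces two distinct $SO(2)$-orbits (or $O(2)$-orbits in the reversible case) of non-constant closed geodesics in $E_F^{<4\ell_F^2}$; the energy bound $E_F(\gamma^m) \geq m^2\ell_F^2 \geq 4\ell_F^2$ for $m \geq 2$ ensures that each orbit comes from a prime closed geodesic, producing two positively distinct (or geometrically distinct) closed geodesics of length less than $2\ell_F$.

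The central new task is to show $\iota^*_{4\ell_F^2} x \neq 0$ in the absence of the pointwise comparison $F \leq \tfrac{1+\lambda}{\lambda}\sqrt{g_1}$. In the previous theorem that comparison yielded the set-theoretic inclusion $\Lambda^{\leq 4\pi^2} S^{2n} \subset E_F^{<4\ell_F^2}$, whence $j_{4\pi^2}$ factored through $\iota_{4\ell_F^2}$ and naturality finished the argument. Here the plan is to build a continuous map $\varphi\colon \Lambda^{\leq 4\pi^2} S^{2n} \to E_F^{<4\ell_F^2}$ with $\iota_{4\ell_F^2} \circ \varphi \simeq j_{4\pi^2}$ as maps into $\Lambda_0 S^{2n}$, via Finsler Rauch-type comparison: under $K \leq 1$ no conjugate points arise along any $F$-geodesic of $F$-arclength $<\pi$, while under $K > \delta := \frac{9\lambda^2}{4(1+\lambda)^2}$ conjugate points must appear by $F$-arclength $\pi/\sqrt{\delta} = \frac{2\pi(1+\lambda)}{3\lambda}$. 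This sandwich is intended to let one reparametrize (or perturb by a controlled homotopy) each loop in the round critical submanifold at level $4\pi^2$ so that its $F$-energy falls strictly below $4\ell_F^2$.

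The main obstacle is producing this comparison with the correct constant, since there is no pointwise inequality between $F$ and $\sqrt{g_1}$ available to integrate. I expect the calibration to exploit reversibility (symmetrizing forward and backward $F$-lengths, which contributes a factor of $1+\lambda$) together with the Rademacher lower bound $\ell_F \geq \pi(1+\lambda)/\lambda$, in such a way that the specific value $\sqrt{\delta} = \frac{3\lambda}{2(1+\lambda)}$ is exactly what is needed to close the inequality down to $E_F < 4\ell_F^2$ on the relevant subspace. Once that length/energy bound is in hand, Proposition \ref{Propwgtprops}(c) transfers the weight, inequality \eqref{EqSCrelsecat} upgrades it to $\SC_{S^{2n}}(E_F^{<4\ell_F^2}) \geq 3$, and Corollary \ref{CorOrbits} together with the iteration estimate concludes the proof exactly as in Theorem \ref{TheoremGeodSpheresEven}.
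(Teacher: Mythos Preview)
Your proposal correctly identifies the class $x$, its weight, the lower bound on $\ell_F$, and the endgame via Corollary \ref{CorOrbits}. The gap is in the central step. You propose to build a map $\varphi\colon \Lambda^{\leq 4\pi^2} S^{2n} \to E_F^{<4\ell_F^2}$ homotopic in $\Lambda_0 S^{2n}$ to the inclusion, but the Rauch-type input you invoke controls conjugate points along $F$-geodesics, not a pointwise or integrated comparison between $E_1$ and $E_F$ on arbitrary $H^1$-loops. Without any relation between $F$ and $g_1$, there is no mechanism by which a loop with $E_1(\gamma)\leq 4\pi^2$ acquires a canonical deformation into $\{E_F<4\ell_F^2\}$; your ``reparametrize or perturb'' step remains a hope, and you flag it yourself as the main obstacle. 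The constant $3\lambda/(2(1+\lambda))$ you extract has no evident bearing on the $F$-energy of round great circles.

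The paper avoids comparing the two sublevel sets altogether and works intrinsically with $E_F$. Standard Morse theory for $E_F$ produces a non-constant closed $F$-geodesic $\gamma$ with $\ind(\gamma)\leq \deg x = 4n-2 < 3(2n-1)$ such that $i_a^*x\neq 0$ for $a=E_F(\gamma)$. The pinching $K\geq \delta>\frac{9\lambda^2}{4(1+\lambda)^2}$ is then fed into an index--to--energy estimate due to Rademacher \cite[Remark~8.5]{RadeNonrev} (the Finsler version of \cite[(1.8)]{BTZclosed}): a closed geodesic of index $<3(\dim M-1)$ satisfies $E_F(\gamma)\leq 9\pi^2/\delta$. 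Combined with $\ell_F\geq \pi(1+\lambda)/\lambda$ this gives $a<4\ell_F^2$, hence $\iota_{4\ell_F^2}^*x\neq 0$ directly. Thus the $9/4$ in the pinching bound is dictated by $\deg x<3(2n-1)$ (the ``$3$'' squared) against the ``$2$'' in $4\ell_F^2=(2\ell_F)^2$, not by any Rauch numerology relative to $g_1$.
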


\begin{proof}
In the proof of Theorem \ref{TheoremGeodSpheresEven} we have constructed a cohomology class $x \in H^{4n-2}(\Lambda S^{2n};\ZZ_2)$ with $\wgt_1(x)\geq 2$.  It follows from standard arguments from the Morse theory of energy functionals, see e.g. \cite[Chapter 2]{Moore} or \cite{RadeNonrev}, that there is a non-constant closed geodesic $\gamma$ with $\ind(\gamma) \leq \deg x =4n-2 <3(2n-1)$, such that $i_{a}^*x\neq 0$, where $a :=E_F(\gamma)$ and where $i_{a}:E_F^a\hookrightarrow \Lambda S^{2n}$ denotes the inclusion. Let $\delta > \frac{9\lambda^2}{4(1+\lambda)^2}$ be given such that $K \geq \delta$. Then, applying the arguments of \cite[Remark 8.5]{RadeNonrev}, which generalizes \cite[(1.8)]{BTZclosed} from Riemannian to Finsler metrics, the pinching condition yields
$$a=E_F(\gamma) \leq\frac{9\pi^2}{\delta} < \frac{4(1+\lambda)^2}{\lambda^2}\pi^2 \leq 4\ell_F^2, $$
where we again used \cite[Theorem 4]{RadeSphere}. This particularly shows that $\iota_{4\ell_F^2}^*x\neq 0$, so by Corollary \ref{CorOrbits}, $E_F^{<4\ell_F^2}$ contains two distinct $SO(2)$-orbits of non-constant closed geodesics and two distinct $O(2)$-orbits if $F$ is reversible. Following the same line of argument as in the proof of Theorem \ref{TheoremGeodSpheresEven} shows the claim.
\end{proof}

\section{Odd-dimensional spheres}

For odd-dimensional spheres it is possible to show the existence of more than two closed geodesics, since the rational cohomology of their free loop spaces has a rich ring structure that we want to make use of. As a disadvantage, we need stronger assumptions on the flag curvature than in the even-dimensional case to obtain a lower bound on $\ell_F$, see Theorem \ref{TheoremRade}.


\begin{theorem}
\label{TheoremGeodSpheresOdd}
Let $n \in \NN$, let $F:TS^{2n+1}\to [0,+\infty)$ be a Finsler metric on $S^{2n+1}$, let $\lambda$ denote its reversibility and let $K$ denote its flag curvature. If 
\begin{equation}
\label{EqFg1est}
\frac{\lambda^2}{(1+\lambda)^2} < K \leq 1 \quad \text{and} \quad F_x(v) <  \frac{k+1}{2m}\cdot\frac{1+\lambda}{\lambda} \sqrt{(g_1)_x(v,v)} \quad \forall (x,v) \in TS^{2n+1}
\end{equation}
for some $k,m \in \NN$, then $F$ will admit $\left\lceil\frac{2m}{k} \right\rceil$ positively distinct closed geodesics of length less than $(k+1)\ell_F$. If $F$ is reversible, then the geodesics can be chosen geometrically distinct.
\end{theorem}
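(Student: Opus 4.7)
The plan is to apply the strategy from Section~2 to a power of the degree-$2n$ generator in the rational cohomology ring of $\Lambda_0 S^{2n+1}$. Either from the Sullivan minimal model of $S^{2n+1}$ or from Proposition~\ref{PropCohomSn} together with the well-known ring structure, there is an isomorphism of graded $\QQ$-algebras
\[
H^*(\Lambda_0 S^{2n+1};\QQ) \cong \QQ[y] \otimes \bigwedge(x), \qquad |x|=2n+1,\; |y|=2n,
\]
where $\bigwedge(x)$ denotes the exterior algebra on a single odd generator. A degree count in the Morse-theoretic splitting \eqref{EqHSnsub}--\eqref{EqHSn} identifies the monomials $y^{2k-1}, y^{2k}, xy^{2k-1}, xy^{2k}$ with the four generators contributed by the critical manifold of $k$-fold iterated great circles of the round metric. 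The class to work with is $u := y^{2m}$, which sits at the $m$-th critical level.

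To bound $\wgt_1(u)$ from below, I would first observe that $|y|=2n$ and $H^{2n}(S^{2n+1};\QQ)=0$, so $c_1^* y = 0$ and Proposition~\ref{Propwgtprops}(a) gives $\wgt_1(y) \geq 1$. Iterating Proposition~\ref{Propwgtprops}(b), and using $y^{2m}\neq 0$ in $\QQ[y]$, yields $\wgt_1(y^{2m}) \geq 2m$. Another application of the splitting in Proposition~\ref{PropCohomSn} shows that in total degree $4mn$ only the $k=m$ summand contributes, both in $H^{4mn}(\Lambda_0 S^{2n+1};\QQ)$ and in $H^{4mn}(\Lambda^{\leq (2\pi m)^2} S^{2n+1};\QQ)$, so the restriction $j_{(2\pi m)^2}^*$ is an isomorphism in this degree and $j_{(2\pi m)^2}^* y^{2m}\neq 0$.

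Next I would compare Finsler and Riemannian energies. Squaring the pointwise inequality in \eqref{EqFg1est} gives
\[
E_F(\gamma) < \frac{(k+1)^2(1+\lambda)^2}{4m^2\lambda^2}\, E_1(\gamma) \qquad \forall\,\gamma \in \Lambda^1 S^{2n+1},
\]
and the curvature assumption $K > \lambda^2/(1+\lambda)^2$ combined with Theorem~\ref{TheoremRade} gives $\ell_F \geq \pi(1+\lambda)/\lambda$, so $\pi^2(1+\lambda)^2/\lambda^2 \leq \ell_F^2$. Combining both bounds, every $\gamma$ with $E_1(\gamma) \leq (2\pi m)^2$ satisfies $E_F(\gamma) < (k+1)^2 \ell_F^2$, which shows $\Lambda^{\leq (2\pi m)^2} S^{2n+1} \subset E_F^{<(k+1)^2\ell_F^2}$. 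Hence $j_{(2\pi m)^2}$ factors through $\iota_{(k+1)^2\ell_F^2}$, and therefore $\iota_{(k+1)^2\ell_F^2}^* y^{2m} \neq 0$.

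At this point Corollary~\ref{CorOrbits} produces $2m$ distinct $SO(2)$-orbits of non-constant contractible closed geodesics of $F$ in $E_F^{<(k+1)^2\ell_F^2}$, and $2m$ distinct $O(2)$-orbits if $F$ is reversible. Any closed geodesic $\gamma$ in this sublevel set satisfies $E_F(\gamma^{k+1}) = (k+1)^2 E_F(\gamma) \geq (k+1)^2 \ell_F^2$, so only iterates of order at most $k$ occur; thus each prime closed geodesic contributes at most $k$ orbits, producing at least $\lceil 2m/k \rceil$ positively distinct (respectively geometrically distinct) closed geodesics, each of length less than $(k+1)\ell_F$ by the Cauchy--Schwarz inequality applied to the energy. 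The step requiring most care is the identification of $y^{2m}$ as the surviving generator from the $m$-th critical manifold in \eqref{EqHSnsub}; once this degree-counting matter is handled, the rest is a routine application of the Lusternik--Schnirelmann-type machinery of Section~2.
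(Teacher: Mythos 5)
Your proposal is correct and follows essentially the same route as the paper: the class $y^{2m}$ with $\wgt_1(y^{2m})\geq 2m$ via Proposition \ref{Propwgtprops}(a),(b), non-vanishing of its restriction through the Ziller-type splitting \eqref{EqHSnsub}--\eqref{EqHSn} and the $\QQ$-cohomology of $T_1S^{2n+1}$, the energy comparison combined with Rademacher's bound $\ell_F\geq \pi(1+\lambda)/\lambda$ to get $\Lambda^{\leq 4m^2\pi^2}S^{2n+1}\subset E_F^{<(k+1)^2\ell_F^2}$, and then Corollary \ref{CorOrbits} plus the iterate-counting strategy of Section 2. No essential differences beyond your (correct) extra remark identifying the monomials with the Morse-theoretic generators.
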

\begin{proof}
It was shown  in \cite{VPS}, see also \cite[Example 2.3.4]{Moore} or \cite[Section 5.4]{MenichiRHT}, that there is an isomorphism of rings
\begin{equation}
\label{EqSphereRingOdd}
H^*(\Lambda S^{2n+1};\QQ) \cong A(x) \otimes \QQ[y],
\end{equation}
where $\deg x=2n+1$ and $\deg y =2n$. Here, $A(x)$ denotes the exterior $\QQ$-algebra generated by $x$. Since $H^{2n}(S^{2n+1};\QQ)=0$, it holds that $y \in \ker c_1^*$, so parts a) and b) of Proposition \ref{Propwgtprops} show that 
$$\wgt_1(y^k)\geq k \qquad \forall k \in \NN.$$
We want to study the even powers $y^{2m} \in H^{4mn}(\Lambda S^{2n+1};\QQ)$, $m \in \NN$. We derive from the results of \cite{ZillerFree} and the universal coefficient theorem that
$$H^i(T_1S^{2n+1};\QQ) \cong \begin{cases}
\QQ & \text{if } i \in \{0,2n,2n+1,4n+1\}, \\
0 & \text{else.}
\end{cases}$$
From this computation and the isomorphisms \eqref{EqHSn} and \eqref{EqHSnsub} we obtain
$$H^{4mn}(\Lambda S^{2n+1};\QQ) \cong H^{2n}(T_1S^{2n+1};\QQ)\cong H^{4mn}(\Lambda^{\leq4m^2\pi^2} S^{2n+1};\QQ), $$
which yields $j_{4m^2\pi^2}^*(y^{2m})\neq 0$ for each $m \in \NN$. By the curvature condition, it follows from \cite[Theorem 1]{RadeSphere} that $\ell_F \geq \pi(1+\frac1\lambda)$. 
Using this estimate and the other assumption in \eqref{EqFg1est}, we compute for each $\gamma \in \Lambda^1S^{2n+1}$ that
$$E_F(\gamma)  < \frac{(k+1)^2}{4m^2}\frac{(1+\lambda)^2}{\lambda^2}E_1(\gamma) \leq \frac{(k+1)^2}{4m^2\pi^2}\ell_F^2 \cdot E_1(\gamma) \qquad \forall \gamma \in \Lambda^1M. $$
This particularly implies that $\Lambda^{\leq 4m^2\pi^2}S^{2n+1} \subset E_F^{<(k+1)^2\ell_F^2}$, which shows $\iota_{(k+1)^2\ell_F^2}^*(y^{2m})\neq 0$. Since $\wgt_1(y^{2m})\geq 2m$, it follows from Corollary \ref{CorOrbits} that $E_F^{<(k+1)^2\ell_F^2}$ contains $2m$ distinct $SO(2)$-orbits of closed geodesics of $F$ and $2m$ distinct $O(2)$-orbits if $F$ is reversible.
By the strategy outlined in section 2, this shows that $E^{<(k+1)^2\ell_F^2}_F$ contains $\lceil \frac{2m}{k} \rceil$ positively distinct closed geodesics of $F$ and $\lceil \frac{2m}{k} \rceil$ geometrically distinct ones if $F$ is reversible. 
\end{proof}

We want to state the case $k=1$ of Theorem \ref{TheoremGeodSpheresOdd} explicitly. 
\begin{cor}
Let $n \in \NN$, let $F:TS^{2n+1}\to [0,+\infty)$ be a Finsler metric on $S^{2n+1}$, let $\lambda$ denote its reversibility and let $K$ denote its flag curvature. If 
$$\frac{\lambda^2}{(1+\lambda)^2} < K \leq 1 \quad \text{and} \quad F_x(v) < \frac{1+\lambda}{m\lambda} \sqrt{(g_1)_x(v,v)} \quad \forall (x,v) \in TS^{2n+1}$$
for some $m \in \NN$, then $F$ will admit $2m$ positively distinct closed geodesics of length less than $2\ell_F$. If $F$ is reversible, then the geodesics can be chosen geometrically distinct.
\end{cor}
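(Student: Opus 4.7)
\begin{proof}[Proof proposal]
The plan is simply to specialize Theorem \ref{TheoremGeodSpheresOdd} to the case $k=1$, since the hypotheses and conclusion of this corollary are exactly what one obtains from that substitution.

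First, observe that under $k=1$ the curvature hypothesis of Theorem \ref{TheoremGeodSpheresOdd} is unchanged, namely $\frac{\lambda^2}{(1+\lambda)^2}<K\leq 1$, and the metric bound becomes
\[
F_x(v) < \frac{(1+1)(1+\lambda)}{2m\lambda}\sqrt{(g_1)_x(v,v)} = \frac{1+\lambda}{m\lambda}\sqrt{(g_1)_x(v,v)}\qquad\forall(x,v)\in TS^{2n+1},
\]
which coincides with the hypothesis of the corollary. Second, the conclusion of Theorem \ref{TheoremGeodSpheresOdd} furnishes $\left\lceil \frac{2m}{k}\right\rceil = \lceil 2m\rceil = 2m$ positively distinct closed geodesics of length less than $(k+1)\ell_F = 2\ell_F$, and geometrically distinct ones in the reversible case. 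Thus the corollary follows directly.
\end{proof}

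There is no obstacle here since the corollary is a verbatim instance of the preceding theorem; the only thing to verify is the arithmetic matching of the constants, which is immediate. Had one wished to avoid quoting the theorem, one would repeat its proof with the explicit choice $k=1$: use the class $y^{2m}\in H^{4mn}(\Lambda S^{2n+1};\QQ)$ from the ring isomorphism \eqref{EqSphereRingOdd}, apply Proposition \ref{Propwgtprops}(b) to conclude $\wgt_1(y^{2m})\geq 2m$, combine the metric bound with Theorem \ref{TheoremRade}(b) to embed $\Lambda^{\leq 4m^2\pi^2}S^{2n+1}\subset E_F^{<4\ell_F^2}$, verify that $\iota_{4\ell_F^2}^*(y^{2m})\neq 0$, and invoke Corollary \ref{CorOrbits}. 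Since in the sublevel set $E_F^{<4\ell_F^2}$ only prime geodesics contribute (a non-prime iterate would have energy at least $4\ell_F^2$), the $2m$ distinct $SO(2)$- (resp.\ $O(2)$-)orbits translate into $2m$ positively (resp.\ geometrically) distinct closed geodesics.
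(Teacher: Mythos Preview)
Your proposal is correct and matches the paper's approach exactly: the paper introduces this corollary with the sentence ``We want to state the case $k=1$ of Theorem \ref{TheoremGeodSpheresOdd} explicitly'' and gives no further proof. Your verification of the arithmetic under the substitution $k=1$ is precisely what is needed.
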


We next want to derive an analogue of Theorem \ref{TheoremGeodSpheresEvenPinch} for odd-dimensional spheres, in which we drop the assumed inequality between $F$ and the round metric from Theorem \ref{TheoremGeodSpheresOdd} and replace it by a stronger pinching condition. 

\begin{theorem}
\label{TheoremGeodSpheresOddPinch}
Let $n \in \NN$, let $F:TS^{2n+1}\to [0,+\infty)$ be a Finsler metric on $S^{2n+1}$, let $\lambda$ denote its reversibility and let $K$ denote its flag curvature. If 
\begin{equation*}
\frac{(2m+1)^2}{(k+1)^2}\frac{\lambda^2}{(1+\lambda)^2} <  K \leq 1,
\end{equation*}
for some $k,m \in \NN$, then $F$ will admit $\left\lceil \frac{2m}{k}\right\rceil$ positively distinct closed geodesics of length less than $(k+1) \ell_F$. If $F$ is reversible, then the geodesics can be chosen geometrically distinct.  
\end{theorem}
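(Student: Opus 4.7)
The plan is to mimic the argument of Theorem \ref{TheoremGeodSpheresEvenPinch}, replacing the class $x \in H^{4n-2}(\Lambda S^{2n};\ZZ_2)$ by the higher-weight class $y^{2m}\in H^{4mn}(\Lambda S^{2n+1};\QQ)$ built in the proof of Theorem \ref{TheoremGeodSpheresOdd}, which satisfies $\wgt_1(y^{2m})\geq 2m$ and has degree $\deg(y^{2m}) = 4mn = 2m\cdot(2n)$. Exactly as in the proof of Theorem \ref{TheoremGeodSpheresEvenPinch}, the standard Morse theory of $C^{1,1}$ energy functionals satisfying the Palais--Smale condition (cf.\ \cite[Chapter 2]{Moore} or \cite{RadeNonrev}) then produces a non-constant closed geodesic $\gamma$ of $F$ with
$$\ind(\gamma) \leq \deg(y^{2m}) = 4mn \quad \text{and} \quad i_a^*(y^{2m}) \neq 0,$$
where $a := E_F(\gamma)$ and $i_a: E_F^a \hookrightarrow \Lambda S^{2n+1}$ denotes the inclusion.

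Next, I would fix $\delta \leq K$ with $\delta > \frac{(2m+1)^2\lambda^2}{(k+1)^2(1+\lambda)^2}$ and invoke the Finsler Rauch--Morse index comparison recorded in \cite[Remark 8.5]{RadeNonrev}. Since conjugate points along $\gamma$ occur at Finsler-length at most $\pi/\sqrt{\delta}$ apart and each contributes multiplicity $\dim S^{2n+1}-1 = 2n$ to the index,
$$\ind(\gamma)\ \geq\ 2n\left\lfloor \frac{L(\gamma)\sqrt{\delta}}{\pi}\right\rfloor.$$
Combined with $\ind(\gamma) \leq 2m\cdot(2n)$, this forces $L(\gamma)\sqrt{\delta}/\pi < 2m+1$, hence $E_F(\gamma) = L(\gamma)^2 < (2m+1)^2\pi^2/\delta$. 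The nontrivial range of the theorem is $k \leq 2m-1$ (so that $\lceil 2m/k\rceil \geq 2$); in this range the pinching hypothesis implies $K > \lambda^2/(1+\lambda)^2$, so Theorem \ref{TheoremRade} gives $\ell_F \geq \pi(1+\lambda)/\lambda$, and combining this with the bound on $\delta$ yields
$$E_F(\gamma) < \frac{(k+1)^2\pi^2(1+\lambda)^2}{\lambda^2}\ \leq\ (k+1)^2\ell_F^2.$$

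It follows by functoriality that $\iota_{(k+1)^2\ell_F^2}^*(y^{2m})\neq 0$, so Corollary \ref{CorOrbits} produces $2m$ distinct $SO(2)$-orbits (respectively $O(2)$-orbits if $F$ is reversible) of non-constant closed geodesics in $E_F^{<(k+1)^2\ell_F^2}$. Any $j$-fold iterate in this sublevel set has energy at least $j^2\ell_F^2$, whence $j\leq k$, so each prime closed geodesic contributes at most $k$ of the $2m$ orbits found; the iterate-counting strategy of Section 2 thus converts these into $\lceil 2m/k\rceil$ positively distinct closed geodesics of length less than $(k+1)\ell_F$, geometrically distinct when $F$ is reversible. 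I expect the main obstacle to lie in the index-comparison step: one must correctly transport the Riemannian Rauch-type index bound to the Finsler setting via \cite{RadeNonrev}, and must verify that the multiplicity $2n$ per conjugate point lines up exactly with the degree $2m\cdot(2n)$ of $y^{2m}$ in order to produce the critical constant $2m+1$ that the pinching on $\delta$ has been tailored to match.
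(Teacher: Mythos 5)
Your proposal is correct and follows essentially the same route as the paper: the class $y^{2m}$ of weight $\geq 2m$, a closed geodesic $\gamma$ with $\ind(\gamma)\leq 4mn<(2m+1)(2n)$ detecting the class in its sublevel set, the index comparison of \cite[Remark 8.5]{RadeNonrev} to bound $E_F(\gamma)<(k+1)^2\ell_F^2$ via Theorem \ref{TheoremRade}, and then Corollary \ref{CorOrbits} plus the iterate-counting strategy of Section 2. The only differences are cosmetic: you unpack the Rauch--Morse index estimate that the paper simply cites as a black box, and you explicitly note that the pinching implies $K>\lambda^2/(1+\lambda)^2$ only in the nontrivial range $k\leq 2m$ (with the case $\lceil 2m/k\rceil=1$ being trivial), a point the paper leaves implicit.
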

\begin{proof}
We have observed in the proof of Theorem \ref{TheoremGeodSpheresOdd} that for each $m \in \NN$ there exists $u_m \in H^{4mn}(\Lambda S^{2n+1};\QQ)$ with $\wgt_1(u_m)\geq 2m$. The critical point theory of energy functionals shows that there is a non-constant closed geodesic $\gamma_m$ with 
$$\ind(\gamma_m) \leq \deg u_m =4mn <(2m+1)(\dim S^{2n+1}-1)$$ 
and $i_{a_m}^*(u_m)\neq 0$, where $a_m :=E_F(\gamma_m)$ and $i_{a_m}:E_F^{a_m}\hookrightarrow \Lambda S^{2n+1}$ denotes the inclusion. As in the even-dimensional case, we apply \cite[Remark 8.5]{RadeNonrev}. If $\delta>\frac{(2m+1)^2}{(k+1)^2}\frac{\lambda^2}{(1+\lambda)^2}$ is chosen such that $K\geq \delta$, then
$$a_m \leq \frac{(2m+1)^2\pi^2}{\delta} < (k+1)^2\pi^2\frac{(1+\lambda)^2}{\lambda^2} \leq (k+1)^2 \ell_F^2$$
again by the estimate \cite[Theorem 1]{RadeSphere} on $\ell_F$. In particular, this shows that $\iota_{(k+1)^2\ell_F^2}^*u_m\neq 0$ and the claim follows in analogy with Theorem \ref{TheoremGeodSpheresOdd}.
\end{proof}

\begin{cor}
\label{CorOdd}
Let $n \in \NN$ and let $F:TS^{2n+1}\to [0,+\infty)$ be a Finsler metric on $S^{2n+1}$ of reversibility $\lambda$ whose flag curvature satisfies
$$\frac{9\lambda^2}{4(1+\lambda)^2}<K \leq 1.$$
Then $F$ admits two positively distinct closed geodesics of length less than $2\ell_F$. If $F$ is reversible, then the geodesics can be chosen geometrically distinct.
\end{cor}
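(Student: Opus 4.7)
The statement is an immediate specialization of Theorem \ref{TheoremGeodSpheresOddPinch}. The plan is to set $k = m = 1$ in that theorem and verify that all the hypotheses and conclusions match those of the corollary verbatim. With $k = m = 1$, the pinching bound $\frac{(2m+1)^2}{(k+1)^2}\frac{\lambda^2}{(1+\lambda)^2}$ becomes $\frac{9}{4}\frac{\lambda^2}{(1+\lambda)^2}$, which is exactly the lower bound on $K$ assumed in the corollary.

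Next I would read off the conclusion of Theorem \ref{TheoremGeodSpheresOddPinch} for these parameters: the number of positively distinct closed geodesics is $\lceil 2m/k \rceil = \lceil 2/1 \rceil = 2$, and the length bound is $(k+1)\ell_F = 2 \ell_F$. The reversibility clause (that the geodesics can be chosen geometrically distinct when $F$ is reversible) transfers directly from the parent theorem.

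There is no genuine obstacle here; the only thing to check is that the substitution is legal, i.e.\ that $k=m=1$ lies in the range $k,m \in \NN$ required in the hypothesis of Theorem \ref{TheoremGeodSpheresOddPinch}, which is the case. Hence the proof reduces to a single sentence invoking Theorem \ref{TheoremGeodSpheresOddPinch} with $k = m = 1$.
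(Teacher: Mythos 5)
Your proposal is correct and coincides with the paper's own proof, which likewise obtains the corollary as the case $k=m=1$ of Theorem \ref{TheoremGeodSpheresOddPinch}. The verification that the pinching constant, the count $\lceil 2m/k\rceil=2$, and the length bound $(k+1)\ell_F=2\ell_F$ specialize correctly is exactly what is needed.
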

\begin{proof}
This is the case $k=m=1$ of Theorem \ref{TheoremGeodSpheresOddPinch}.
\end{proof}

\begin{remark}
The existence part of Corollary \ref{CorOdd} is a consequence of a much stronger result by W. Wang from \cite{WangMultiple}, who has shown the existence of $n+1$ distinct closed geodesics under the weaker pinching condition $\frac{\lambda^2}{(1+\lambda)^2}<K \leq 1$. However, the methods used by Wang are entirely different than the ones used in this note and it does not follows from Wang's results that the length of the second-longest closed geodesic  of $F$ is less than $2\ell_F$.
\end{remark}

\section{Complex and quaternionic projective spaces}

Let $n \in \NN$ and let $\gFS$ be the Fubini-Study metric on $\CP^n$. Put $g_1 := 4\gFS$, let $E_1:\Lambda^1\CP^n \to \RR$ denote its energy functional and let $\Lambda^{\leq a}\CP^n:=E_1^{-1}((-\infty,a])$ denote its closed sublevel sets. Then the sectional curvature of $g_1$ satisfies $\frac14\leq K \leq 1$ and the results from \cite{ZillerFree} imply the following isomorphisms, that are derived in analogy with those from Proposition \ref{PropCohomSn}, for any field $\F$ and $i \in \NN_0$:
\begin{align}
&H^i(\Lambda^{\leq (2m\pi)^2}\CP^n;\F) \cong H^i(\CP^n;\F)\oplus \bigoplus_{k=1}^m H^{i-2(k-1)n-1}(T_1\CP^n;\F) \quad \forall m \in \NN, \label{EqHCPnsub} \\ 
&H^i(\Lambda \CP^n;\F) \cong H^i(\CP^n;\F)\oplus \bigoplus_{k=1}^\infty H^{i-2(k-1)n-1}(T_1\CP^n;\F), \label{EqHCPn}
\end{align}
where $T_1\CP^n$ denotes the unit tangent bundle of $\CP^n$ with respect to $g_1$. Furthermore, the inclusions $j_{(2\pi m)^2}:\Lambda^{\leq (2m\pi)^2}\CP^n \hookrightarrow \Lambda\CP^n$ and $c_1:\CP^n \hookrightarrow \Lambda\CP^n$ have the analogous properties to those described for spheres in the previous section. \\

Given $n \in \NN$, we let $g_1$ be the metric on $\HP^n$ for which $(\HP^n,g_1)$ is a globally symmetric space whose prime closed geodesics have length $2\pi$. It again follows from the results of \cite{ZillerFree} and is proven along the lines of Proposition \ref{PropCohomSn} that its free loop space has the following cohomology groups for a field $\F$ and each $i \in \NN_0$:
$$H^i(\Lambda\HP^n;\F) \cong H^i(\HP^n;\F) \oplus \bigoplus_{k=1}^\infty H^{i-2(k-1)(n+1)-3}(T_1\HP^n;\F). $$
Furthermore, the analogous formula to \eqref{EqHCPnsub} holds for the sublevel sets of the energy functional of $g_1$.

\begin{theorem}
\label{TheoremKPn}
Let $\mathbb{K} \in \{\CC,\mathbb{H}\}$ and $n \geq 3$. Let $F: T\KP^n \to [0,+\infty)$ be a Finsler metric on $\KP^n$ of reversibility $\lambda$. If the flag curvature of $F$ satisfies $0 < K \leq 1$ and if
$$F_x(v) < \frac{1+\lambda}{\lambda} \sqrt{(g_1)_x(v,v)} \qquad \forall (x,v) \in T\KP^n,$$
then $F$ will admit two positively distinct closed geodesics of length less than $2 \ell_F$. If $F$ is reversible, then the geodesics can be chosen geometrically distinct.
\end{theorem}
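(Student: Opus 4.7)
I would adapt the argument of Theorem \ref{TheoremGeodSpheresEven} almost verbatim, replacing the formula \eqref{EqHSnsub} for $\Lambda S^{2n}$ by the Morse--Bott decompositions \eqref{EqHCPnsub}--\eqref{EqHCPn} (and their $\HP^n$-analogue) stated at the start of this section. Concretely, I look for a cohomology class $x \in H^k(\Lambda \KP^n; A)$ with $k \ge 2$ such that $c_1^*x = 0$, the restriction $j_{4\pi^2}^*x$ to the first sublevel set of $E_1$ is non-zero, and $H^{k-1}(\Lambda^2 \KP^n; A) = 0$. The first two properties come for free whenever $x$ is chosen from the $k=1$-summand of the Morse--Bott splitting in a degree where $H^*(\KP^n; A)$ vanishes. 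A natural candidate in both cases is a generator $x \in H^3(\Lambda \KP^n; \QQ)$: for $\HP^n$ it is the image of a generator of $H^0(T_1 \HP^n; \QQ)$ under the degree-$3$ shift, while $H^3(\HP^n; \QQ) = 0$; for $\CP^n$ (with $n \ge 3$) it comes from a generator of $H^2(T_1 \CP^n; \QQ)$, which is easily produced via the Gysin sequence of the $S^{2n-1}$-bundle $T_1 \CP^n \to \CP^n$, while again $H^3(\CP^n; \QQ) = 0$.

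Having such a class in hand, Lemma \ref{LemmaWeightMV} upgrades $\wgt_1(x) \ge 1$ to $\wgt_1(x) \ge 2$ provided $H^2(\Lambda^2 \KP^n; \QQ) = 0$. The remainder is mechanical: since $\dim_\RR \KP^n$ is even, Theorem \ref{TheoremRade}(a) gives $\ell_F \ge \pi(1+\lambda)/\lambda$, and the hypothesis $F < \tfrac{1+\lambda}{\lambda}\sqrt{g_1}$ implies, for every $\gamma \in \Lambda^1 \KP^n$,
\[
E_F(\gamma) < \frac{(1+\lambda)^2}{\lambda^2} E_1(\gamma) \le \frac{\ell_F^2}{\pi^2} E_1(\gamma),
\]
so $\Lambda^{\le 4\pi^2}\KP^n \subset E_F^{<4\ell_F^2}$ and the inclusion $\iota_{4\ell_F^2}$ factors through $j_{4\pi^2}$, forcing $\iota_{4\ell_F^2}^* x \ne 0$. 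Combined with $\wgt_1(x) \ge 2$, Corollary \ref{CorOrbits} produces two distinct $SO(2)$-orbits (resp. $O(2)$-orbits in the reversible case) of non-constant closed geodesics in $E_F^{<4\ell_F^2}$. Since $E_F(\gamma^m) = m^2 E_F(\gamma) \ge 4\ell_F^2$ for every non-constant closed geodesic $\gamma$ and every $m \ge 2$, this sublevel set contains only prime geodesics, and the two orbits correspond to two positively distinct (resp. geometrically distinct) closed geodesics of length less than $2\ell_F$.

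The main obstacle, as in the even-sphere argument, is the cohomological vanishing $H^2(\Lambda^2 \KP^n; \QQ) = 0$ together with the verification that the candidate class in $H^3(\Lambda \CP^n; \QQ)$ is genuinely non-zero for $n \ge 3$. Both reduce to low-degree computations in the mapping space $C^0(S^2, \KP^n)$, accessible through a Sullivan-model argument using the (well-known) minimal models of $\CP^n$ and $\HP^n$ combined with the evaluation fibration $\Omega^2 \KP^n \to \Lambda^2 \KP^n \to \KP^n$. This is also presumably where the hypothesis $n \ge 3$ enters essentially: for smaller $n$, extra low-degree classes appear in $\Lambda^2 \KP^n$ and the Mayer--Vietoris argument underlying Lemma \ref{LemmaWeightMV} breaks down.
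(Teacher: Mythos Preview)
Your overall strategy---produce a class $x$ of weight at least two that survives to the first $g_1$-sublevel set, then conclude via Corollary~\ref{CorOrbits} and the iteration bound---is exactly the paper's, and the energy comparison using Theorem~\ref{TheoremRade}(a) is carried out correctly. The gap is in the step where you invoke Lemma~\ref{LemmaWeightMV}: its hypothesis $H^{2}(\Lambda^{2}\KP^{n};\QQ)=0$ is \emph{false} for both $\K=\CC$ and $\K=\mathbb{H}$, so the Mayer--Vietoris argument does not upgrade $\wgt_1(x)\geq 1$ to $\wgt_1(x)\geq 2$.

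For $\K=\CC$ this is immediate: the inclusion of constant maps $\CP^{n}\hookrightarrow \Lambda^{2}\CP^{n}$ is a section of the evaluation fibration, so $H^{2}(\CP^{n};\QQ)\cong\QQ$ injects into $H^{2}(\Lambda^{2}\CP^{n};\QQ)$. For $\K=\mathbb{H}$ the base contributes nothing in degree~$2$, but the fibre $\Omega^{2}\HP^{n}$ is simply connected with $\pi_{2}(\Omega^{2}\HP^{n})\otimes\QQ=\pi_{4}(\HP^{n})\otimes\QQ\cong\QQ$, hence $H^{2}(\Omega^{2}\HP^{n};\QQ)\cong\QQ$; in the Serre spectral sequence of $\Omega^{2}\HP^{n}\to\Lambda^{2}\HP^{n}\to\HP^{n}$ the possible targets $E_{2}^{2,1}$ and $E_{2}^{3,0}$ both vanish, so this class survives and $H^{2}(\Lambda^{2}\HP^{n};\QQ)\cong\QQ$. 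In other words, the very Sullivan-model computation you propose would reveal that Lemma~\ref{LemmaWeightMV} is inapplicable here, rather than confirming its hypothesis.

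The paper avoids this obstacle entirely: instead of Lemma~\ref{LemmaWeightMV}, it imports a weight-two class from the proof of \cite[Theorem~7.4]{SpherComp}, which establishes $\wgt_1(u)\geq 2$ by a different mechanism (not the vanishing of $H^{k-1}(\Lambda^{2}X)$). For $\CP^{n}$ that class lives in $H^{3}(\Lambda\CP^{n};\QQ)$ as you guessed, but for $\HP^{n}$ it lives in $H^{7}(\Lambda\HP^{n};\QQ)$ rather than $H^{3}$. Once the weight-two class is in hand, the paper's remaining steps coincide with yours.
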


\begin{proof}
Consider the case $\K=\CC$. In the proof of \cite[Theorem 7.4]{SpherComp} applied to $M=\CP^n$, it was shown that there exists $u \in H^3(\Lambda \CP^n;\QQ)$ with $\wgt_1(u)\geq 2$. Using the results of \cite{ZillerFree} and the universal coefficient theorem, one obtains
$$H^i(T_1\CP^n;\QQ) \cong \begin{cases}
 \QQ & \text{if } i \in  \{2k \ | \ 0 \leq k \leq n-1\}\cup \{2k+2n-1 \ | \ 1 \leq k \leq n \}, \\
 0 & \text{else.}
\end{cases}$$ 
In terms of the isomorphisms in \eqref{EqHCPnsub} and \eqref{EqHCPn} this shows that
$$H^3(\Lambda \CP^n;\QQ) \cong H^2(T_1\CP^n;\QQ) \cong H^3(\Lambda^{\leq 4\pi^2}\CP^n;\QQ),$$
which yields $j_{4\pi^2}^*u\neq 0$. The remainder of the proof is carried out in analogy with the proof of Theorem \ref{TheoremGeodSpheresEven}.

In the case $\K=\mathbb{H}$, we can apply the same line of argument and obtain a class $u \in H^7(\Lambda\HP^n;\QQ)$ with $\wgt_1(u)\geq 2$. In analogy with the other case, one shows that $j_{4\pi^2}^*u\neq 0$ using the cohomology of $T_1\HP^n$ that was computed in \cite{ZillerFree} and concludes the proof in the same way as for $\K=\CC$.
\end{proof}

 \bibliography{SCgeod}
 \bibliographystyle{amsalpha}

\end{document}